\def\RR{\mathbb R}
\newcommand{\set}[1]{\left\lbrace #1\right\rbrace}
\providecommand{\abs}[1]{\left\lvert#1\right\rvert}
\providecommand{\norm}[1]{\left\lVert#1\right\rVert}
\newcommand{\qtq}[1]{\quad\text{#1}\quad}
\newtheorem{theorem}{Theorem}[section]
\newtheorem{proposition}[theorem]{Proposition}
\newtheorem{lemma}[theorem]{Lemma}
\theoremstyle{remark}
\newtheorem*{remark}{Remark}
\newtheorem*{wpa}{Well posedness assumption}
\numberwithin{equation}{section}
\begin{document}
\title[Energy decay for evolution equations with delay]{Energy decay for evolution equations with delay feedbacks}

\author[V. Komornik]{Vilmos Komornik}
\address{College of Mathematics and Computational Science, Shenzhen Uni- versity, Shenzhen 518060, People’s Republic of China, and Département de mathématique\\
         Université de Strasbourg\\
         7 rue René Descartes\\
         67084 Strasbourg Cedex, France}
\email{komornik@math.unistra.fr}
\author[C. Pignotti]{Cristina Pignotti}
\address{Dipartimento di Ingegneria e Scienze dell'Informazione e Matematica, Universit\`a di L'Aquila, Via Vetoio, Loc. Coppito, 67010 L'Aquila, Italy}
\email{pignotti@univaq.it}
\thanks{The first author was supported by the grant NSFC No. 11871348. The research of the second author was partially supported by
GNAMPA 2018 project ``Analisi e controllo di modelli differenziali non lineari"(INdAM).
This work has been initiated during the first author's visit of the Department DISIM of Universit\`a di L'Aquila in December 2016.
He thanks the members of the department for their hospitality.}
\subjclass[2010]{37L05, 93D15}
\keywords{Evolution equations, delay feedbacks, stabilization, wave equation}
\date{Version of 2019-02-20}

\begin{abstract}
We study abstract linear and nonlinear evolutionary systems with single or multiple delay feedbacks, illustrated by several concrete examples.
In particular, we assume that the operator associated with the undelayed part of the system generates an exponentially stable semigroup and that the delay damping coefficients are locally integrable in time.
A step by step procedure combined with Gronwall's inequality allows us to prove the existence and uniqueness of solutions.
Furthermore, under appropriate conditions we obtain exponential decay estimates.
\end{abstract}
\maketitle

\section{Introduction}\label{s1}

First we consider the evolution equation
\begin{equation}\label{11}
\begin{cases}
U'(t)=AU(t)+k(t)BU(t-\tau)\qtq{in}(0,\infty),\\
U(0)=U_0,\\
BU(t-\tau)=f(t)\qtq{for}t\in(0,\tau),
\end{cases}
\end{equation}
where $A$ generates an exponentially stable semigroup $(S(t))_{t\ge 0}$ in a Hilbert space $H$, $B$ is a  continuous linear operator of $H$ into itself, $k:[0,\infty)\to\RR$ is a function belonging to $L^1_{loc}([0,\infty);\RR)$, and $\tau>0$ is a delay parameter.
The initial data $U_0$ and $f$ are taken in $H$ and $C([0,\tau];H)$, respectively.
By the assumptions on the operator $A$ there exist two numbers $M,\omega>0$ such that
\begin{equation}\label{12}
\norm{S(t)}\le Me^{-\omega t}\qtq{for all}t\ge 0.
\end{equation}
Time delay effects often appear in applications to physical models, and it is well-known (\cite{BP, DLP}) that they can induce instability phenomena.
We are interested in showing that under some mild assumptions on $\tau, k$, the operator $B$ and the constants $M, \omega$ the system \eqref{11} is still exponentially stable.

Stability results for the above abstract  model have been recently obtained in \cite{JEE15, NicaisePignotti18} but only  for a constant delay feedback coefficient $k$.
In these papers  some nonlinear extensions are also  considered.
The arguments of \cite{JEE15, NicaisePignotti18} could be easily extended to $k(\cdot)\in L^\infty$ with  sufficiently small $\norm{k}_\infty$.
Recently, motivated by some applications, $L^1_{loc}$ damping coefficients $k$ have been considered, for instance of intermittent type (see \cite{Haraux, Pignotti, FP}), and stability estimates have been obtained for some particular models.
Here, our aim is to give a well-posedness result  and an exponential decay estimate for the general model \eqref{11} with a damping coefficient $k$ belonging only to $L^1_{loc}$.

As a non-trivial generalization, next we consider the case of multiple time-varying delays.
Namely, let $\tau_i:[0, +\infty)\rightarrow (0, +\infty),$ $i=1, \dots, l,$ be the time delays functions belonging to $W^{1,\infty}(0,+\infty)$.
We assume for each  $i=1, \ldots, l,$ that
\begin{equation}\label{13qqq}
0\le \tau_i(t)\le \overline\tau_i
\end{equation}
and
\begin{equation}\label{14qqq}
\tau_i^\prime (t)\le c_i<1.
\end{equation}
for a.e. $t>0$, with suitable constants $\overline\tau_i$ and $c_i$.
It follows  from \eqref{14qqq} that
\begin{equation*}
(t-\tau_i(t))^\prime = 1-\tau_i^\prime (t) >0, \quad \mbox{a.e.}\ t>0,
\end{equation*}
for a.e. $t>0$, and hence
\begin{equation*}
t-\tau_i(t)\ge -\tau_i(0)\qtq{for all}t\ge 0.
\end{equation*}
Therefore, setting
\begin{equation}\label{15qqq}
\tau^*:=\max_{i=1,..., l} \set{\tau_i(0)}
\end{equation}
we may consider the following abstract model:

\begin{equation}\label{16qqq}
\begin{cases}
U'(t)=AU(t)+\sum_{i=1}^lk_i(t)B_iU(t-\tau_i(t))\qtq{in}(0,\infty),\\
U(0)=U_0,\\
B_iU(t)=f_i(t)\qtq{for}t\in [-\tau^*, 0],
\end{cases}
\end{equation}
where the operator $A$, as before, generates an exponentially stable semigroup $(S(t))_{t\ge 0}$ in a Hilbert space $H$, and for each $i=1,\dots, l$,  $B_i$ is a  continuous linear operator of $H$ into itself, $k_i:[0,\infty)\to\RR$ belongs to ${\mathcal L}^1_{loc}([0,+\infty);\RR )$, and $\tau_i$ is a  variable time delay.
Under some mild assumptions on the involved functions and parameters we will establish the  well-posedness of the problem \eqref{16qqq}, and we will obtain exponential decay estimates for its solutions.

The nonlinear version of previous model

\begin{equation}\label{51qqq}
\begin{cases}
U'(t)=AU(t)+\sum_{i=1}^lk_i(t)B_iU(t-\tau_i(t))+F(U(t))\\
\hspace{7,6 cm}\qtq{in}(0,\infty),\\
U(0)=U_0,\\
B_iU(t)=f_i(t)\qtq{for}t\in [-\tau^*, 0],
\end{cases}
\end{equation}
is also analyzed under some Lipschitz continuity assumption on the nonlinear function $F.$ Exponential  decay of the energy  is obtained
for small initial data under a suitable well-posedness assumption. A quite general class of examples satisfying our abstract setting is exhibited.

The paper is organized as follows.
In Sections \ref{s2qqq} and \ref{s3qqq} we prove the well-posedness and the exponential decay estimate for the model \eqref{11} with a single constant time delay.
Next, in Section \ref{s4qqq} we  analyze the more general system \eqref{16qqq} with multiple time-varying time delays. The nonlinear abstract model \eqref{51qqq} is studied in Section \ref{s5qqq}, where we give an exponential decay estimate for small initial data.  Finally, in Section \ref{s6qqq} we  give some applications of our abstract results to concrete models.

\section{Single constant delay: well-posedness}\label{s2qqq}

The following well-posedness result holds true.

\begin{proposition}\label{p21qqq}
Given $U_0\in H$ and a continuous function $f:[0,\tau]\to H$, the problem \eqref{11} has a unique (weak) solution given by Duhamel's formula
\begin{equation}\label{21qqq}
U(t)=S(t)U_0+\int_0^tk(s)S(t-s)BU(s-\tau)\ ds\qtq{for all}t\ge 0.
\end{equation}
\end{proposition}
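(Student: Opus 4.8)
The plan is to use the classical step-by-step (method of steps) argument, exploiting the fact that the delay term only involves the solution on earlier time intervals, together with a Gronwall-type estimate on each step to control the (weak) solution. First I would observe that on the initial interval $[0,\tau]$ the term $BU(s-\tau)$ is completely determined by the prescribed data, namely $BU(s-\tau)=f(s)$ for $s\in(0,\tau)$, so the right-hand side of \eqref{21qqq} is a known continuous $H$-valued function of $t$; hence on $[0,\tau]$ formula \eqref{21qqq} simply reads $U(t)=S(t)U_0+\int_0^t k(s)S(t-s)f(s)\,ds$, which is well-defined because $k\in L^1_{loc}$, $S(\cdot)$ is strongly continuous and bounded on compact intervals, and $f$ is continuous. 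One checks that this $U$ is continuous on $[0,\tau]$ (continuity of the Duhamel integral in $t$ follows from dominated convergence and strong continuity of the semigroup) and that it is the unique weak/mild solution on that interval: any weak solution must satisfy the variation-of-parameters identity, and two such solutions have a difference $V$ with $V(t)=\int_0^t k(s)S(t-s)BV(s-\tau)\,ds$; on $[0,\tau]$ the integrand vanishes since $BV(s-\tau)=0$, so $V\equiv 0$ there.

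Next I would iterate. Assume $U$ has been constructed as a continuous function on $[0,n\tau]$ solving \eqref{21qqq} there. On $[n\tau,(n+1)\tau]$ the quantity $BU(s-\tau)$ for $s$ in that interval involves only values of $U$ on $[(n-1)\tau,n\tau]$, which are already known; so again the right-hand side of \eqref{21qqq} is an explicit continuous function of $t$ on $[n\tau,(n+1)\tau]$, and I define $U$ there by that formula. Continuity at the junction $t=n\tau$ is automatic since both expressions agree there, and continuity on the open interval follows as before. This extends $U$ to a continuous solution on $[0,(n+1)\tau]$, and by induction to all of $[0,\infty)$. Uniqueness propagates the same way: if $V$ is the difference of two weak solutions, then on each interval $[n\tau,(n+1)\tau]$ one has $\norm{V(t)}\le \int_{n\tau}^t \abs{k(s)}\,\norm{S(t-s)}\,\norm{B}\,\norm{V(s-\tau)}\,ds$, which on $[0,\tau]$ forces $V\equiv0$ and then inductively forces $V\equiv0$ on every subsequent interval, since $V$ vanishes on the previous interval. (Alternatively, if one prefers a single global argument, one can bound $\norm{V(t)}\le \norm{B}\sup_{[0,T]}\norm{S}\int_0^t\abs{k(s)}\norm{V(s-\tau)}\,ds$ on $[0,T]$ and apply a delayed Gronwall inequality to conclude $V\equiv0$; but the step-by-step version is cleaner here.)

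Finally I would address the meaning of ``weak solution'': one should specify that $U\in C([0,\infty);H)$ satisfying the integral identity \eqref{21qqq} is what is meant (the mild/Duhamel formulation), and note that if $U_0\in D(A)$ and $f$ is smooth enough one recovers a strong solution of \eqref{11}, but that is not needed for the proposition as stated. The only mild subtlety—hence the point I would treat most carefully—is verifying that the Duhamel integral $\int_0^t k(s)S(t-s)g(s)\,ds$ depends continuously on $t$ when $k$ is merely $L^1_{loc}$ and $g$ merely continuous: one splits $\int_0^{t'}-\int_0^t = \int_t^{t'}k(s)S(t'-s)g(s)\,ds + \int_0^t k(s)\bigl(S(t'-s)-S(t-s)\bigr)g(s)\,ds$, bounds the first term by $\norm{B}\sup\norm{S}\sup\norm{g}\int_t^{t'}\abs{k}\to0$ by absolute continuity of the integral, and handles the second by dominated convergence using strong continuity of $S(\cdot)$ and the integrable dominating function $\abs{k(s)}\cdot 2\sup\norm{S}\cdot\sup\norm{g}$. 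Everything else is routine bookkeeping with the method of steps.
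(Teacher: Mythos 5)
Your proposal is correct and follows essentially the same step-by-step (method of steps) argument as the paper: on each successive interval of length $\tau$ the delay term is a known inhomogeneity, so existence, uniqueness and the Duhamel representation follow from the standard theory of inhomogeneous abstract Cauchy problems. The extra care you take in verifying continuity of the Duhamel integral when $k$ is merely $L^1_{loc}$ is precisely the content the paper delegates to \cite[Corollary 2.2]{Pazy}.
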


\begin{proof}
We  proceed step by step by working on time intervals of length $\tau.$
First we consider $t\in [0,\tau]$.
Setting $G_1(t) =k(t)BU(t-\tau),$ $t\in [0,\tau]$ we observe that $G_1(t)= k(t)f(t),$ $t\in [0,\tau]$.
Then, problem \eqref{11} can be rewritten, in the time interval $[0,\tau]$, as a standard inhomogeneous evolution problem:
\begin{equation}\label{22qqq}
\begin{cases}
U'(t)=AU(t)+G_1(t) \qtq{in}(0,\tau),\\
U(0)=U_0.
\end{cases}
\end{equation}

Since $k\in L^1_{loc}([0,\infty);\RR)$ and $f\in C([0,\tau];H)$, we have that $G_1\in L^1((0,\tau);H)$.
Therefore, applying  \cite[Corollary 2.2]{Pazy} there exists a unique solution $U \in C([0,\tau]; H)$ of \eqref{22qqq} satisfying Duhamel's formula
\begin{equation*}
U(t)= S(t)U_0+\int_0^tS(t-s) G_1 (s) ds,\quad t\in [0,\tau]
\end{equation*}
and hence
\begin{equation*}
U(t)= S(t)U_0+\int_0^tk(s) S(t-s) BU (s-\tau) ds,\quad t\in [0,\tau].
\end{equation*}
Next we consider the time interval $[\tau, 2\tau]$.
Setting $G_2(t)=$$k(t)BU(t-\tau)$ we observe that $U(t-\tau)$ is known for $t\in [\tau, 2\tau]$  from the first step.
Then $G_2$ is a known function and it belongs to $L^1((\tau,2\tau); H)$.
So we can rewrite our model \eqref{11} in the time interval $[\tau,2\tau ]$ as the inhomogeneous evolution problem
\begin{equation}\label{23qqq}
\begin{cases}
U'(t)=AU(t)+G_2(t) \qtq{in}(\tau ,2\tau),\\
U(\tau)=U (\tau^-).
\end{cases}
\end{equation}

By the standard theory of abstract Cauchy problems we have a unique continuous solution $U:[\tau, 2\tau)\rightarrow H$ satisfying
\begin{equation*}
U(t)= S(t-\tau )U(\tau^-) +\int_\tau ^tS(t -s) G_2 (s) ds,\quad t\in [\tau ,2\tau],
\end{equation*}
and hence
\begin{equation*}
U(t)= S(t-\tau )U(\tau^-) +\int_\tau ^tk(s) S(t -s) BU (s-\tau) ds,\quad t\in [\tau ,2\tau].
\end{equation*}
Putting together the partial solutions obtained in the first and second steps we get a unique continuous solution $U:[0,2\tau]\rightarrow \RR$
satisfying  Duhamel's formula
\begin{equation*}
U(t)= S(t)U_0+\int_0^tk(s) S(t-s) BU (s-\tau) ds,\quad t\in [0,2\tau].
\end{equation*}

Iterating this argument we find a unique solution $U\in C([0,\infty);H)$ satisfying the representation formula \eqref{21qqq}.
\end{proof}

\section{Single constant delay: exponential stability}\label{s3qqq}

It follows from Duhamel's formula \eqref{21qqq}
 that
\begin{equation}\label{31qqq}
\begin{array}{l}
\displaystyle{e^{\omega t}\norm{U(t)}
\le M\norm{U_0}+Me^{\omega\tau}\int_0^{\tau}\abs{k(s)}e^{\omega (s-\tau)}\norm{f(s)}\ ds}\\
\qquad\displaystyle{
+M\norm{B}e^{\omega\tau}\int_{\tau}^t\abs{k(s)}e^{\omega (s-\tau)}\norm{U(s-\tau)}\ ds}
\end{array}
\end{equation}
for all $t\ge 0$.
Setting
\begin{align*}
&u(t):=e^{\omega t}\norm{U(t)},\\
&\alpha:=M\norm{U_0}+Me^{\omega\tau}\int_0^{\tau}\abs{k(s)}e^{\omega (s-\tau)}\norm{f(s)}\ ds\intertext{and}
&\beta(t):=M\norm{B}e^{\omega\tau}\abs{k(t+\tau)}
\end{align*}
we may rewrite \eqref{31qqq} in the form
\begin{equation*}
u(t)\le \alpha+\int_0^{t-\tau}\beta(s)u(s)\ ds\qtq{for all}t\ge 0.
\end{equation*}

Since $\beta\ge 0$ and $u\ge 0$, it follows that
\begin{equation*}
u(t)\le \alpha+\int_0^{t}\beta(s)u(s)\ ds\qtq{for all}t\ge 0.
\end{equation*}
Applying Gronwall's inequality we conclude that
\begin{equation*}
u(t)\le \alpha e^{\int_0^t\beta(s)\ ds}\qtq{for all}t\ge 0,
\end{equation*}
i.e.,
\begin{equation}\label{32qqq}
\norm{U(t)}\le \alpha e^{\int_0^t\beta(s)\ ds-\omega t}\qtq{for all}t\ge 0.
\end{equation}

This estimate yields the following result:

\begin{theorem}\label{t31qqq}
Assume that there exist two constants  $\omega'\in(0,\omega)$ and $\gamma\in\RR$ such that
\begin{equation}\label{33qqq}
M\norm{B}e^{\omega\tau}\int_0^t \abs{k(s+\tau)}\ ds\le\gamma+\omega't\qtq{for all}t\ge 0.
\end{equation}
Then there exists a constant $M'>0$ such that the solutions of \eqref{11} satisfy the estimate
\begin{equation}\label{34qqq}
\norm{U(t)}\le M'e^{-(\omega-\omega') t}\qtq{for all}t\ge 0.
\end{equation}
\end{theorem}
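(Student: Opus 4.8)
The plan is to read the conclusion off directly from the estimate \eqref{32qqq} already derived before the theorem; the work is essentially bookkeeping with the exponents. First I would unwind the definition of $\beta$ to note that
\begin{equation*}
\int_0^t\beta(s)\ ds=M\norm{B}e^{\omega\tau}\int_0^t\abs{k(s+\tau)}\ ds\qtq{for all}t\ge 0,
\end{equation*}
so that the hypothesis \eqref{33qqq} says \emph{exactly} that $\int_0^t\beta(s)\,ds\le\gamma+\omega't$ for every $t\ge 0$.

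Next I would substitute this bound into the exponent in \eqref{32qqq}: since $x\mapsto e^x$ is increasing,
\begin{equation*}
\norm{U(t)}\le\alpha\,e^{\int_0^t\beta(s)\,ds-\omega t}\le\alpha\,e^{\gamma+\omega' t-\omega t}=\bigl(\alpha e^{\gamma}\bigr)\,e^{-(\omega-\omega')t}\qtq{for all}t\ge 0.
\end{equation*}
I would then set $M':=\alpha e^{\gamma}$ and check it is a finite positive constant: the quantity $\alpha=M\norm{U_0}+Me^{\omega\tau}\int_0^{\tau}\abs{k(s)}e^{\omega(s-\tau)}\norm{f(s)}\ ds$ is finite because $f\in C([0,\tau];H)$ is bounded on the compact interval $[0,\tau]$ and $k\in L^1_{loc}([0,\infty);\RR)$, so the integral defining $\alpha$ converges (indeed $e^{\omega(s-\tau)}\le 1$ for $s\le\tau$, whence $\alpha\le M\norm{U_0}+Me^{\omega\tau}\norm{f}_{C([0,\tau];H)}\int_0^\tau\abs{k(s)}\,ds$, displaying the linear dependence on the data if desired).

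There is no real obstacle here; the only point needing attention is that $\omega'<\omega$ must be invoked so that the rate $-(\omega-\omega')$ in the final exponential is strictly negative — which is precisely what converts \eqref{33qqq} into decay rather than mere boundedness. This also clarifies the meaning of \eqref{33qqq}: it is an \emph{averaged}, sublinear-growth smallness condition on $\abs{k}$ (the accumulated delay damping may not grow faster than a rate strictly below $\omega$), not a pointwise bound, which is exactly what keeps the theorem applicable to merely $L^1_{loc}$ coefficients such as intermittent ones.
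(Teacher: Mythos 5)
Your proof is correct and follows essentially the same route as the paper: you identify $\int_0^t\beta(s)\,ds$ with the left-hand side of \eqref{33qqq}, substitute into \eqref{32qqq}, and take $M'=\alpha e^{\gamma}$, exactly as the authors do. The additional verification that $\alpha$ is finite is a reasonable (if unstated in the paper) touch, and nothing is missing.
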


\begin{proof}
Using our previous notation $\beta(s)=M\norm{B}e^{\omega\tau}\abs{k(s+\tau)}$ we have
\begin{equation*}
\int_0^t\beta(s)\ ds-\omega t\le\gamma-(\omega-\omega') t\qtq{for all}t\ge 0.
\end{equation*}
Combining this with \eqref{32qqq} the estimate \eqref{34qqq} follows with $M':=\alpha e^{\gamma}$.
\end{proof}

\begin{remark}
The hypothesis \eqref{33qqq} is satisfied, in particular,
if the feedback coefficient $k$ belongs to $L^\infty (0,\infty)$
and
\begin{equation*}
M\norm{B}\cdot\norm{k}_\infty e^{\omega\tau} < \omega.
\end{equation*}
It is also satisfied if $k\in L^1(0,\infty)$ or, more generally, if $k=k_1+k_2$ with $k_1\in L^1(0,\infty)$ and $k_2\in L^\infty (0,\infty)$ with a sufficiently small norm $\norm{k_2}_\infty$.
Thus Proposition \ref{t31qqq} extends the results obtained in
\cite{NicaisePignotti18}, in the linear setting, for constant $k$.
\end{remark}

\section{Time variable delays}\label{s4qqq}

Let us now consider the model \eqref{16qqq} with multiple time varying delays.
First, we study its well-posedness.

\begin{theorem}\label{t41}
Given $U_0\in H$ and  continuous functions $f_i:[-\tau^*, 0]\to H$, $i=1,\dots,l,$  the problem \eqref{16qqq} has a unique (weak) solution given by Duhamel's formula
\begin{equation}\label{41qqq}
U(t)=S(t)U_0+\int_0^tS(t-s)\sum_{i=1}^lk_i(s)B_iU(s-\tau_i(s))\ ds,
\end{equation}
for all $t\ge 0.$
\end{theorem}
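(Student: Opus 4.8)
The plan is to adapt the step-by-step argument of Proposition \ref{p21qqq} to the present setting, the only new feature being that the delays $\tau_i$ are time-varying and that there are several of them. First I would observe that, by the hypotheses \eqref{13qqq}--\eqref{14qqq}, each map $t\mapsto t-\tau_i(t)$ is (absolutely) continuous and strictly increasing, with $t-\tau_i(t)\ge-\tau_i(0)\ge-\tau^*$ and $t-\tau_i(t)\to+\infty$ as $t\to\infty$. Hence for every $t\ge 0$ and every $i$ the argument $s-\tau_i(s)$ runs, as $s$ ranges over $[0,t]$, within $[-\tau^*,t)$; more precisely, since $t-\tau_i(t)<t$, the value $U(s-\tau_i(s))$ for $s\le t$ only ever involves $U$ on $[-\tau^*,t)$, i.e. on strictly earlier times (or on the history interval, where it equals $f_i$). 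This is the structural fact that makes the iteration work.

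Next I would set up the induction on a suitable sequence of times. Unlike the constant-delay case one cannot simply use the grid $\{n\tau\}$, so I would instead argue on any finite interval $[0,T]$ as follows. Because $t\mapsto t-\tau_i(t)$ is increasing and continuous and $0-\tau_i(0)\le 0$, there is a smallest $T_1>0$ (depending on $T$ and the $\tau_i$) such that $s-\tau_i(s)\le 0$ for all $s\in[0,T_1]$ and all $i$ — e.g. one may take $T_1=\min_i\big(\text{the unique }t\text{ with }t-\tau_i(t)=0\big)$, or simply a fixed positive number $\delta$ such that $\tau_i(t)\ge t$ on $[0,\delta]$ if the $\tau_i(0)$ are all positive; in any case $T_1$ can be taken bounded below by a positive constant since $\tau_i(0)>0$. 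On $[0,T_1]$ the forcing term $G(t):=\sum_i k_i(t)B_iU(t-\tau_i(t))$ equals the known function $\sum_i k_i(t)B_if_i(t-\tau_i(t))$, which lies in $L^1((0,T_1);H)$ because $k_i\in L^1_{loc}$, each $B_i$ is bounded, and $f_i\circ(\mathrm{id}-\tau_i)$ is continuous. Then \cite[Corollary 2.2]{Pazy} gives a unique $U\in C([0,T_1];H)$ satisfying Duhamel's formula \eqref{41qqq} on $[0,T_1]$. For the inductive step, having constructed $U$ on $[0,T_n]$, I would let $T_{n+1}$ be the largest time $\le T$ such that $s-\tau_i(s)\le T_n$ for all $s\in[0,T_{n+1}]$ and all $i$; on $[T_n,T_{n+1}]$ the term $U(s-\tau_i(s))$ is already known (it is $U$ evaluated at a time $\le T_n$, or $f_i$), so $G\in L^1((T_n,T_{n+1});H)$ and \cite[Corollary 2.2]{Pazy} again yields a unique continuous extension of $U$ to $[0,T_{n+1}]$ satisfying \eqref{41qqq}. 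Concatenating as in Proposition \ref{p21qqq} produces a unique solution on $[0,T]$, and since $T$ was arbitrary, on $[0,\infty)$.

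The one point that needs genuine care — and which I expect to be the main obstacle — is showing that the increasing sequence $(T_n)$ does not accumulate below $T$, i.e. that finitely many steps cover $[0,T]$. This is where \eqref{14qqq} is essential: from $\tau_i'(t)\le c_i<1$ we get $(t-\tau_i(t))'=1-\tau_i'(t)\ge 1-c_i>0$, so each $t\mapsto t-\tau_i(t)$ is bi-Lipschitz with inverse having Lipschitz constant $\le(1-c_i)^{-1}$; consequently the increment $T_{n+1}-T_n$ is bounded below by $(\min_i(1-c_i))\cdot$(increment of $T_n$) — more carefully, if $s-\tau_i(s)=T_n$ defines $s=:\sigma_i$, then $\sigma_i-T_n=\tau_i(\sigma_i)\ge$ (a quantity controlled from below), and in fact $T_{n+1}\ge T_n+\min_i(1-c_i)^{-1}\cdot 0$ — so I would instead argue directly that $T_{n+1}-T_n\ge c:=\min_i\big(\inf_{t}\tau_i(t)\big)$ is impossible in general since $\tau_i$ may vanish, and therefore fall back on the bi-Lipschitz bound: $T_n\ge$ (the $n$-fold iterate of the contraction $t\mapsto$ the $T_{n}$-recursion) grows at least geometrically away from any fixed point, so only finitely many steps are needed to exhaust $[0,T]$. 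I would write this accumulation argument out carefully, as it is the only place where the time-varying nature of the delays and the hypothesis $c_i<1$ actually enter; the rest is a routine transcription of the constant-delay proof together with the observation that a finite sum of $L^1_{loc}$-in-time, $H$-continuous terms is again $L^1_{loc}$ in $H$.
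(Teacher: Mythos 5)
There is a genuine gap, and it sits exactly where you yourself flagged it: the termination of your induction. Your scheme works, and coincides with the paper's first step, precisely when every $\tau_i$ is bounded below by a positive constant $\underline\tau_i$ (one then advances by the fixed amount $\tau_{min}=\min_i\underline\tau_i$ at each step). But the theorem only assumes $0\le\tau_i(t)\le\overline\tau_i$, so a delay may degenerate. If $\tau_{i_0}(t^*)=0$ for some $t^*$, then $t^*$ is a fixed point of $\varphi_{i_0}(t)=t-\tau_{i_0}(t)$, and your recursion $T_{n+1}=\min_i\varphi_i^{-1}(T_n)$ produces an increasing sequence that converges to the smallest such fixed point above $0$ and never passes it: for $l=1$ and $\tau(t)=\max\{1-t,0\}$ (which satisfies \eqref{13qqq}--\eqref{14qqq} with $c_1=0$... indeed $\tau'\le 0<1$) one gets $T_{n+1}=(T_n+1)/2$, so $T_n\uparrow 1$ and $[0,T]$ is never exhausted for $T>1$. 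Worse, at $t=t^*$ the term $B_iU(t-\tau_i(t))=B_iU(t)$ involves the unknown at the \emph{current} time, so no step-by-step scheme of the kind you propose can cross $t^*$: the equation ceases to be of retarded type there. Your fallback claim that the iterates ``grow at least geometrically away from any fixed point'' is false: the bound $(\varphi_i^{-1})'\le(1-c_i)^{-1}$ is an \emph{upper} bound on the increments, not a lower bound, and gives no control on $T_{n+1}-T_n=\tau_i(T_{n+1})$, which tends to $0$ along the sequence. (Even in the more favorable case where $\tau_i(t)>0$ for every $t$ but $\inf_t\tau_i(t)=0$, the correct termination argument is not geometric growth but monotonicity: a bounded increasing sequence of iterates of $\varphi_i^{-1}$ would converge to a zero of some $\tau_i$, a contradiction. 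You do not give this argument, and it still fails the moment a delay is allowed to vanish.)

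The paper's proof handles exactly this difficulty by a regularization that is absent from your proposal: replace $\tau_i$ by $\tau_i^\epsilon:=\tau_i+\epsilon$, extend the data $f_i$ continuously to $[-\tau^*-1,0]$, solve the shifted problem by the step-by-step argument (now legitimate, since $\tau_i^\epsilon\ge\epsilon>0$ gives a uniform step length), and then show --- via the change of variables $\sigma=s-\tau_i^{\epsilon_1}(s)$, the local uniform continuity of the approximate solutions, and Gronwall's inequality --- that the family $U_\epsilon$ is uniformly Cauchy on compact time intervals as $\epsilon\to0$, so that its limit is continuous and satisfies \eqref{41qqq}. To repair your proof you must either add the hypothesis $\inf_t\tau_i(t)>0$ (which weakens the theorem), or supply an approximation-and-passage-to-the-limit argument of this kind for the general case.
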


\begin{proof}
First we consider the case  with time delays functions $\tau_i(\cdot)$  bounded  from below by some positive constants, i.e., there exists for each $i=1,\dots,l$ a constant $\underline{\tau}_i>0$  such that
\begin{equation*}
\tau_i(t)\ge\underline{\tau}_i\qtq{for all}t\ge 0.
\end{equation*}
Then we may argue step by step, as in the proof of Proposition \ref{p21qqq}, by restricting ourselves each time to a time interval of length
\begin{equation*}
\tau_{min}:=\min \set{\underline {\tau}_i\ :\ i=1, \dots, l}>0.
\end{equation*}
Indeed, we infer from the assumption \eqref{13qqq} that
\begin{equation*}
t-\tau_i(t) \le t-\underline\tau_i\le t-\tau_{min}
\end{equation*}
for all $t\ge 0$ and $i=1,\dots,l$.
Therefore, if $t\in [k \tau_{min}, (k+1)  \tau_{min}],$ then $t-\tau_i(t) \le k\tau_{min}$ for all $i=1,\dots, l$.

Now, we pass to the general case.
For each fixed positive number $\epsilon\le 1$ we set
\begin{equation}\label{42qqq}
\tau_i^\epsilon (t):=\tau_i(t)+\epsilon,\quad t\ge0, \quad i=1,\dots, l.
\end{equation}
Moreover, we extend the initial data $f_i$ to continuous functions $\tilde f_i: [-\tau^* -1, 0]\rightarrow H$ with the constant $\tau^*$ defined in \eqref{15qqq}.

Since $\tau_i^\epsilon(t)\ge \epsilon >0$ for all $t$ and $i$, the corresponding model
\eqref{16qqq} with initial data $U_0,$ $\tilde f_i$ and time delay functions $\tau^{\epsilon}_i(\cdot)$  has a unique solution $U_\epsilon (\cdot)\in C[0, +\infty)\rightarrow H$ satisying the representation formula \eqref{41qqq}.

Now consider two positive parameters $\epsilon_1,\epsilon_2\le 1$.
Since $U_{\epsilon_1}(\cdot)$ and $U_{\epsilon_2}(\cdot)$ satisfy \eqref{41qqq}, we have
\begin{multline*}
U_{\epsilon_1}(t)- U_{\epsilon_2}(t) =\\
\int_0^t S(t-s) \sum_{i=1}^l k_i(s)\left [ B_iU_{\epsilon_1}(s-\tau_i^{\epsilon_1}(s))- B_iU_{\epsilon_2}(s-\tau_i^{\epsilon_2}(s))
 \right ]\, ds
\end{multline*}
and hence
\begin{multline*}
U_{\epsilon_1}(t)- U_{\epsilon_2}(t) =\\
\int_0^t S(t-s) \sum_{i=1}^l k_i(s)\left [ B_iU_{\epsilon_1}(s-\tau_i^{\epsilon_1}(s))- B_iU_{\epsilon_2}(s-\tau_i^{\epsilon_1}(s))
 \right ]\, ds\\
 + \int_0^t S(t-s) \sum_{i=1}^l k_i(s)\left [ B_iU_{\epsilon_2}(s-\tau_i^{\epsilon_1}(s))- B_iU_{\epsilon_2}(s-\tau_i^{\epsilon_2}(s))
 \right ]\, ds.
\end{multline*}
It follows that
\begin{equation}\label{43qqq}
e^{\omega t}\Vert U_{\epsilon_1}(t)- U_{\epsilon_2}(t)\Vert  \le M(I_1+I_2)
\end{equation}
with
\begin{equation*}
I_1:=\sum_{i=1}^l\int_{\tau_i^{\epsilon_1}(0)}^t e^{-\omega s}  \vert k_i\vert \vert B_i\Vert\,\Vert U_{\epsilon_1}(s-\tau_i^{\epsilon_1}(s))- U_{\epsilon_2}(s-\tau_i^{\epsilon_1}(s))
 \Vert\, ds
\end{equation*}
and
\begin{equation*}
I_2:=\sum_{i=1}^l \int_0^t e^{-\omega s}  \vert k_i\vert \Vert B_i\Vert\,\Vert  B_iU_{\epsilon_2}(s-\tau_i^{\epsilon_1}(s))- B_iU_{\epsilon_2}(s-\tau_i^{\epsilon_2}(s))
 \Vert \, ds.
\end{equation*}

Using the changes of variable
\begin{equation*}
s-\tau_i^{\epsilon_1}(s)=\sigma
\end{equation*}
in the integrals in the sum $I_1$ and using the notation
\begin{equation}\label{44qqq}
\varphi_i(s):=s-\tau_i(s)=\epsilon_1+\sigma
\end{equation}
we obtain the estimate
\begin{equation}\label{45qqq}
I_1\le \sum_{i=1}^l \frac {e^{\omega (\overline\tau_i+1)}}{1-c_i}\Vert B_i\Vert \int_0^t e^{\omega s} \vert k_i(\varphi_i^{-1}(s+\epsilon_1)\vert \Vert U_{\epsilon_1}(s)- U_{\epsilon_2}(s)\Vert \,
 ds.
\end{equation}

Furthermore, since $U_{\epsilon_2}(\cdot)\in C([0,+\infty);H)$ is locally uniformly continuous and
\begin{equation*}
\tau_i^{\epsilon_1}(t)- \tau_i^{\epsilon_2}(t)=\epsilon_1-\epsilon_2
\end{equation*}
for all $t$ and $i$, we have for every fixed $T>0$ the estimate
\begin{equation}\label{46qqq}
I_2 \le C(T; \epsilon_1-\epsilon_2),
\end{equation}
with a constant $C(T;\epsilon_1-\epsilon_2)$ tending to zero as $\epsilon_1-\epsilon_2\to 0.$

Using \eqref{45qqq} and \eqref{46qqq} in \eqref{43qqq} and applying Gronwall's Lemma for each fixed $T>0,$ we  conclude that
for $\epsilon\rightarrow 0$ the functions $U_\epsilon(\cdot)$ converge locally uniformly to a function $U\in C([0,+\infty); H)$ which satisfies \eqref{41qqq}. This completes the proof.
\end{proof}

Under an appropriate relation between the problem's parameters the system \eqref{16qqq} is exponentially stable:

\begin{theorem}\label{t42qqq}
Assume that there exist two constants $\omega'\in(0,\omega)$ and $\gamma\in\RR$ such that
\begin{equation}\label{47qqq}
M \sum_{i=1}^l\frac {e^{\omega\overline \tau_i}}{1-c_i}\Vert B_i\Vert \int_0^t\abs{k_i(\varphi_i^{-1}(s))} \ ds\le \gamma+\omega^\prime t \quad\mbox{for all}\quad t\ge 0.
\end{equation}
Then there exists a constant $M'>0$ such that the solutions of \eqref{16qqq} satisfy the estimate
\begin{equation}\label{48qqq}
\norm{U(t)}\le M'e^{-(\omega-\omega') t}\qtq{for all}t\ge 0.
\end{equation}
\end{theorem}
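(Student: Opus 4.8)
The plan is to reproduce the Gronwall argument of Section~\ref{s3qqq}, the only genuinely new ingredient being a change of variable $\sigma=s-\tau_i(s)$ that recasts each delayed term in the form occurring in \eqref{47qqq}.

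First I would take norms in Duhamel's formula \eqref{41qqq}, use \eqref{12}, and set $u(t):=e^{\omega t}\norm{U(t)}$. By \eqref{14qqq} each map $\varphi_i(s):=s-\tau_i(s)$ is continuous and strictly increasing, and since $\varphi_i(0)=-\tau_i(0)\le0$ and $\varphi_i(s)\ge s-\overline\tau_i\to+\infty$ there is a unique $t_i:=\varphi_i^{-1}(0)\ge 0$; for $s\in[0,t_i]$ one has $s-\tau_i(s)\in[-\tau^*,0]$, so that $B_iU(s-\tau_i(s))=f_i(s-\tau_i(s))$, while for $s>t_i$ one has $s-\tau_i(s)>0$. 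Splitting each of the $l$ integrals at $t_i$ accordingly, and using that $f_i$ is continuous on the compact interval $[-\tau^*,0]$ while $k_i\in L^1_{loc}$, the part arising from the prescribed history, together with $M\norm{U_0}$, is bounded by a finite constant $\alpha$ independent of $t$, so that
\[
u(t)\le\alpha+M\sum_{i=1}^l\norm{B_i}\int_{t_i}^t e^{\omega s}\abs{k_i(s)}\,\norm{U(s-\tau_i(s))}\ ds
\]
for all $t\ge 0$ (the $i$-th integral being understood to be empty when $t\le t_i$).

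Next, in the $i$-th integral I would substitute $\sigma=\varphi_i(s)$. By \eqref{14qqq} we have $d\sigma=(1-\tau_i^\prime(s))\,ds$ with $1-\tau_i^\prime(s)\ge1-c_i>0$, hence $ds\le d\sigma/(1-c_i)$; the limits become $0$ and $\varphi_i(t)\le t$; moreover $\norm{U(s-\tau_i(s))}=\norm{U(\sigma)}$, and $s=\sigma+\tau_i(s)\le\sigma+\overline\tau_i$ by \eqref{13qqq} gives $e^{\omega s}\le e^{\omega\overline\tau_i}e^{\omega\sigma}$. Writing $e^{\omega\sigma}\norm{U(\sigma)}=u(\sigma)$ and using the nonnegativity of the integrand to enlarge the upper limit from $\varphi_i(t)$ to $t$, this yields
\[
u(t)\le\alpha+\int_0^t\beta(s)u(s)\ ds,\qquad \beta(s):=M\sum_{i=1}^l\frac{e^{\omega\overline\tau_i}}{1-c_i}\norm{B_i}\,\abs{k_i(\varphi_i^{-1}(s))},
\]
and $\beta\in L^1_{loc}$ (indeed this is already implicit in \eqref{47qqq}).

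Finally, Gronwall's inequality gives $u(t)\le\alpha e^{\int_0^t\beta(s)\,ds}$, and the hypothesis \eqref{47qqq} says precisely that $\int_0^t\beta(s)\,ds\le\gamma+\omega^\prime t$; hence $\norm{U(t)}=e^{-\omega t}u(t)\le\alpha e^{\gamma}e^{-(\omega-\omega^\prime)t}$, which is \eqref{48qqq} with $M^\prime:=\alpha e^{\gamma}$. The step I expect to be the main obstacle is the substitution $\sigma=\varphi_i(s)$: one must verify that \eqref{14qqq} makes $\varphi_i$ a bi-Lipschitz increasing bijection onto its range with Jacobian bounded away from zero, that the transformed interval $[0,\varphi_i(t)]$ is contained in $[0,t]$, and that the ``history part'' of each feedback (the range $s\le t_i$, where $B_iU(s-\tau_i(s))$ is the prescribed datum $f_i(s-\tau_i(s))$) is split off into $\alpha$ rather than kept under the integral sign; once these bookkeeping matters are handled, the argument is a verbatim copy of the proof of Theorem~\ref{t31qqq}.
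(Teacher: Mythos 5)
Your proposal is correct and follows essentially the same route as the paper: take norms in Duhamel's formula, use $e^{\omega s}\le e^{\omega\overline\tau_i}e^{\omega(s-\tau_i(s))}$, perform the substitution $\sigma=\varphi_i(s)$ with Jacobian bounded below by $1-c_i$, absorb the history part (where $s-\tau_i(s)\le 0$ and $B_iU$ equals the datum $f_i$) into the constant $\tilde\alpha$, and apply Gronwall together with \eqref{47qqq}. The only cosmetic difference is that you split the integral at $t_i=\varphi_i^{-1}(0)$ before changing variables, whereas the paper changes variables first and then splits at $\sigma=0$; the two are equivalent.
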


\begin{proof} It follows from  Duhamel's formula \eqref{41qqq} that
\begin{align*}
e^{\omega t}\norm{U(t)}
&\le M\norm{U_0}+M\sum_{i=1}^l \int_0^{t}e^{\omega s}\vert k_i(s)\vert  \,\Vert B_i U(s-\tau_i)\Vert\ ds\\
&\le M\norm{U_0}
+M\sum_{i=1}^l e^{\omega\overline\tau_i}\int_{0}^t e^{\omega (s-\tau_i(s))}\vert k_i(s)\vert \,\Vert B_i U(s-\tau_i)\Vert\ ds
\end{align*}
for all $t\ge 0$.

Now we make the change of variable  $\varphi_i(s):= s-\tau_i(s)=\sigma$ for every $i=1,\dots, l$.
Note that the functions $\varphi_i(\cdot) $ are invertible by \eqref{14qqq}.
We have the estimates
\begin{multline*}
\int_0^t e^{\omega (s-\tau_i(s))}\vert k_i(s)\vert  \,\Vert B_i U(s-\tau_i)\Vert\ ds\\
\le \frac 1 {1-c_i}\int_{-\tau_i(0)}^{t-\tau_i(t)} e^{\omega\sigma}\vert k_i(\varphi_i^{-1}(\sigma)\vert\,\Vert B_i U(\sigma )\Vert\ d\sigma
\end{multline*}
and hence
\begin{multline*}
e^{\omega t}\norm{U(t)}
\le M\norm{U_0}+M\sum_{i=1}^l \frac {e^{\omega\overline\tau_i}} {1-c_i} \int_{-\tau_i(0)}^{0}e^{\omega s}\vert k_i(\varphi_i^{-1}(s))\vert  \,\Vert f_i(s)\Vert\ ds\\
+M\sum_{i=1}^l \frac {e^{\omega\overline\tau_i}}{1-c_i}\int_{0}^t e^{\omega s}\vert k_i(\varphi_i^{-1}(s))\vert \,\Vert B_i \Vert \norm{U(s)}\ ds
\end{multline*}
for all $t\ge 0$.
Setting
$\tilde u(t):=e^{\omega t}\norm{U(t)}$ and
\begin{equation}\label{49qqq}
\tilde \alpha:=M\Big  (\norm{U_0}+
\sum_{i=1}^l \frac {e^{\omega\overline\tau_i}} {1-c_i} \int_{-\tau_i(0)}^{0}e^{\omega s}\vert k_i(\varphi_i^{-1}(s))\vert  \,\Vert f_i(s)\Vert\ ds
\Big ),
\end{equation}
\begin{equation}\label{410qqq}
\tilde \beta (t)=M \sum_{i=1}^l \frac {e^{\omega\overline\tau_i}}{1-c_i}\vert k_i(\varphi_i^{-1}(s))\vert \Vert B_i\Vert,
\end{equation}
we may rewrite the above estimate in the form
\begin{equation*}
\tilde u(t)\le \tilde \alpha+\int_0^{t}\tilde \beta(s)\tilde u(s)\ ds\qtq{for all}t\ge 0.
\end{equation*}
Applying Gronwall's inequality we conclude that
\begin{equation*}
\tilde u(t)\le \tilde \alpha e^{\int_0^t\tilde \beta(s)\ ds}\qtq{for all}t\ge 0,
\end{equation*}
i.e.,
\begin{equation}\label{411qqq}
\norm{U(t)}\le \alpha e^{\int_0^t\tilde \beta(s)\ ds-\omega t}\qtq{for all}t\ge 0.
\end{equation}
Now we can conclude as in the proof of Proposition \ref{t31qqq} provided that \eqref{47qqq} is satisfied.
\end{proof}

\begin{remark}
The hypothesis \eqref{47qqq} is satisfied in particular if the feedback coefficients $k_i, i=1,\dots, l,$ belong to $L^\infty (0,+\infty )$ and
\begin{equation*}
M \sum_{i=1}^l\frac {e^{\omega\overline \tau_i}}{1-c_i}\Vert B_i\Vert\,\Vert k_i\Vert_\infty <\omega.
\end{equation*}
It is also satisfied if $k_i(\varphi^{-1}_i(\cdot ))\in L^1(0,+\infty),$ $i=1,\ldots, l,$ or, more generally if $k_i\circ\varphi^{-1}_i =k_i^1+k_i^2$ with $k_i^1\in L^1(0,+\infty)$ and $ k_i^2\in L^\infty (0,+\infty )$ with sufficiently small norms $\Vert k_i^2\Vert_\infty$.
\end{remark}

\section{A nonlinear model}\label{s5qqq}

We now consider the nonlinear model
\eqref{51qqq},
where the operator $A$, as before, generates an exponentially stable semigroup $(S(t))_{t\ge 0}$ in a Hilbert space $H$, and for each  $i=1,\dots, l,$
\begin{itemize}
\item $B_i$ is a  continuous linear operator of $H$ into itself,
\item $k_i\in {\mathcal L}^1_{loc}([0,+\infty);\RR )$,
\item $\tau_i$ is a  variable time delay satisfying \eqref{13qqq} and \eqref{14qqq},
\item $f_i\in C([-\tau^*,0]; H)$ with  $\tau^*$ defined in  \eqref{15qqq}.
\end{itemize}
Furthermore, the nonlinear function $F$ satisfies some local Lipschitz assumption as precised below.

We will prove an exponential stability result for {\em small} initial data under a suitable well-posedness assumption. Then, we will give a class of examples for which this assumption is satisfied.

Concerning $F$ we assume that $F(0)=0$, and that the following local Lipschitz condition is satisfied: for each constant $r>0$ there exists a constant $L(c)>0$  such that
\begin{equation}\label{52qqq}
\norm{F(U)-F(V)}_H\le L(r)\norm{U-V}_H
\end{equation}
whenever $\norm{U}_H\le r$ and  $\norm{V}_H\le r$.

Furthermore, we assume tha the system is well posed in the following sense:

\begin{wpa}
There exist two constants $\omega'\in(0,\omega)$ and $\gamma\in\RR$ such that
\eqref{47qqq} is satisfied.
Furthermore, there exist two positive constants $\rho$ and $C_{\rho}$ with $L(C_\rho ) <\frac {\omega -\omega'}M$ such that if $U_0\in H$ and $f_i\in C([0,\tau^*];H)$ for $i=1, \ldots,l$ satisfy the inequality
\begin{equation}\label{53qqq}
\norm{U_0}^2_H+ \sum_{i=1}^l \int_0^{\tau^*}\abs{k_i(s)} \cdot\norm{f_i(s)}_H^2\ ds < \rho^2,
\end{equation}
then \eqref{51qqq} has a unique solution $U\in C([0,+\infty );H)$ satisfying $\norm{U(t)}\le C_\rho$ for all $t>0$.
\end{wpa}

\begin{theorem}\label{t51qqq}
Under the above well-posedness assumption,
there exists a constant $\tilde M>0$ such that all these solutions satisfy the estimate
\begin{equation*}
\norm{U(t)}\le \tilde M e^{-(\omega-\omega'-ML(C_\rho)) t}\qtq{for all}t\ge 0.
\end{equation*}
\end{theorem}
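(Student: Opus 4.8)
The plan is to mimic the structure of the proof of Theorem~\ref{t42qqq}, the only new ingredient being the nonlinear term $F(U(t))$, which I will absorb into the Gronwall argument using the Lipschitz bound \eqref{52qqq} together with the a priori bound $\norm{U(t)}\le C_\rho$ furnished by the well-posedness assumption. First I would write down Duhamel's formula for \eqref{51qqq}, which has the same form as \eqref{41qqq} with an extra term $\int_0^t S(t-s)F(U(s))\,ds$. Multiplying by $e^{\omega t}$ and using \eqref{12} I get
\begin{equation*}
e^{\omega t}\norm{U(t)}\le M\norm{U_0}+M\sum_{i=1}^l\int_0^t e^{\omega s}\abs{k_i(s)}\,\norm{B_iU(s-\tau_i(s))}\ ds+M\int_0^t e^{\omega s}\norm{F(U(s))}\ ds.
\end{equation*}
Since $F(0)=0$ and $\norm{U(s)}\le C_\rho$ for all $s$, the Lipschitz condition \eqref{52qqq} gives $\norm{F(U(s))}\le L(C_\rho)\norm{U(s)}$, so the last term is bounded by $ML(C_\rho)\int_0^t e^{\omega s}\norm{U(s)}\ ds$.

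Next I would treat the delay sum exactly as in the proof of Theorem~\ref{t42qqq}: split off the contribution of the initial data $f_i$ on $[-\tau_i(0),0]$ via the change of variables $\varphi_i(s)=s-\tau_i(s)=\sigma$ (invertible by \eqref{14qqq}), using \eqref{13qqq} to replace $e^{\omega(s-\tau_i(s))}$ by $e^{\omega\sigma}$ up to the factor $e^{\omega\overline\tau_i}$ and picking up the Jacobian factor $1/(1-c_i)$. This yields, with $\tilde u(t):=e^{\omega t}\norm{U(t)}$,
\begin{equation*}
\tilde u(t)\le \tilde\alpha+\int_0^t\Big(\tilde\beta(s)+ML(C_\rho)\Big)\tilde u(s)\ ds\qtq{for all}t\ge 0,
\end{equation*}
where $\tilde\alpha$ is exactly the constant \eqref{49qqq} (the data term is unchanged) and $\tilde\beta$ is the function \eqref{410qqq}. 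Applying Gronwall's inequality gives $\tilde u(t)\le\tilde\alpha\,e^{\int_0^t\tilde\beta(s)\,ds+ML(C_\rho)t}$, i.e.
\begin{equation*}
\norm{U(t)}\le\tilde\alpha\,e^{\int_0^t\tilde\beta(s)\,ds-\omega t+ML(C_\rho)t}\qtq{for all}t\ge 0.
\end{equation*}

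Finally I would invoke the hypothesis \eqref{47qqq} from the well-posedness assumption, which says $\int_0^t\tilde\beta(s)\,ds\le\gamma+\omega' t$, so the exponent is at most $\gamma-(\omega-\omega'-ML(C_\rho))t$. Setting $\tilde M:=\tilde\alpha\,e^\gamma$ (finite because \eqref{53qqq} controls $\tilde\alpha$) gives the claimed estimate; note the decay rate is genuinely positive precisely because the well-posedness assumption requires $L(C_\rho)<\frac{\omega-\omega'}{M}$. \textbf{The main subtlety}, rather than a real obstacle, is that one must use the a priori bound $\norm{U(t)}\le C_\rho$ to linearize $F$ \emph{before} applying Gronwall — one cannot first run Gronwall and then bound $F$ — and that the constant $C_\rho$ in turn depends on $\rho$ through the well-posedness hypothesis, so the smallness of the initial data \eqref{53qqq} is what makes the whole scheme consistent. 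Everything else is a routine repetition of the computations in Section~\ref{s4qqq}, so I would keep those steps brief and refer back to the proof of Theorem~\ref{t42qqq}.
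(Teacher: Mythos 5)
Your proposal is correct and follows essentially the same route as the paper: Duhamel's formula with the extra nonlinear term, the bound $\norm{F(U(s))}\le L(C_\rho)\norm{U(s)}$ from \eqref{52qqq} and the a priori bound $\norm{U(t)}\le C_\rho$, the same change of variables $\varphi_i(s)=s-\tau_i(s)$ as in Theorem \ref{t42qqq}, and Gronwall applied to $\tilde\beta(s)+ML(C_\rho)$. Your explicit remark that the linearization of $F$ must precede Gronwall is exactly the (implicit) logic of the paper's argument.
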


\begin{proof} By  Duhamel's formula \eqref{41qqq} we have
\begin{multline*}
e^{\omega t}\norm{U(t)}
\le M\norm{U_0}
+M\sum_{i=1}^l e^{\omega\overline\tau_i}\int_{0}^t e^{\omega (s-\tau_i(s))}\abs{k_i(s)} \cdot\norm{B_i U(s-\tau_i)}\ ds\\
+M L(C_\rho) \int_0^t e^{\omega s} \norm{U(s)} \, ds
\end{multline*}
for all $t\ge 0$.

Now, as before, we make the change of variable  $\varphi_i(s):= s-\tau_i(s)=\sigma$ for every $i=1,\dots, l$ to get the estimate
\begin{multline*}
e^{\omega t}\norm{U(t)}
\le M\norm{U_0}+M\sum_{i=1}^l \frac {e^{\omega\overline\tau_i}} {1-c_i} \int_{-\tau_i(0)}^{0}e^{\omega s}\vert k_i(\varphi_i^{-1}(s))\vert\cdot\Vert f_i(s)\Vert\ ds\\
+M\sum_{i=1}^l \frac {e^{\omega\overline\tau_i}}{1-c_i}\int_{0}^t e^{\omega s}\vert k_i(\varphi_i^{-1}(s))\vert\cdot\Vert B_i \Vert\cdot\norm{U(s)}\ ds\\
 +M L(C_\rho) \int_0^t e^{\omega s} \norm{U(s)} \, ds\hspace{3 cm}
\end{multline*}
for all $t\ge 0$.
Setting $\tilde u(t)=e^{\omega t}\Vert U(t)\Vert$
we may rewrite it in the form
\begin{equation*}
\tilde u(t)\le \tilde \alpha+\int_0^{t}\tilde \beta(s)\tilde u(s)\ ds+M L(C_\rho) \,\int_0^t\tilde u(s)\, ds \qtq{for all}t\ge 0,
\end{equation*}
where $\tilde \alpha, \tilde \beta$ are defined as in \eqref{49qqq} and \eqref{410qqq}.
Applying Gronwall's inequality we conclude that
\begin{equation*}
\tilde u(t)\le \tilde \alpha e^{\int_0^t \left [\tilde \beta(s)+ ML(C_\rho)\right ]\ ds}\qtq{for all}t\ge 0,
\end{equation*}
i.e.,
\begin{equation}\label{54}
\norm{U(t)}\le \tilde\alpha e^{\int_0^t\tilde \beta(s)\ ds +M L(C_\rho)t -\omega t}\qtq{for all}t\ge 0.
\end{equation}
Now we can conclude as in the proof of Theorem \ref{t31qqq} provided that \eqref{47qqq} is satisfied.
\end{proof}

Now we consider a class of examples for which the above well-posedness assumption is satisfied.

Let $W$ be a real Hilbert space and  $A_0: {\mathcal D}(A_0)\rightarrow W$  a positive self-adjoint operator with a compact inverse in $W$.
We denote by $\tilde W:=  {\mathcal D}(A_0^{1/2})$ the domain of the operator $A_0^{1/2}.$
Furthermore, let $W_0, \ldots, W_l$  be  real Hilbert spaces
and  $C:W_0\rightarrow W,$ $D_i:W\rightarrow W_i,$ bounded linear operators such that
\begin{equation}\label{55}
\Vert D_i^*u\Vert_{W_i}^2\le d_i\Vert u\Vert_W^2
\end{equation}
and
\begin{equation}\label{56}
a\Vert u\Vert^2_{W_i}\le \Vert C^*u\Vert^2_{W_0}
\end{equation}
for all $u\in W$ and $i=1,\dots, l$, with
suitable positive constants $d_i$ and $a$.

Furthermore, let $ \Psi : \tilde W\rightarrow \RR$ be a functional having a G\^{a}teaux derivative $D \Psi (u)$ in every $u\in \hat W.$
In the same spirit as in \cite{ACS}, we assume the following:

\begin{enumerate}[\upshape (i)]
\item For every $u\in\tilde W$ there is a constant $c(u)$ such that
\begin{equation*}
\vert D \Psi (u) (v)\vert\le c(u)\Vert v\Vert_W,\quad \forall\ v\in\tilde W,
\end{equation*}
where $D\Psi (u)$ is the G\^ateaux derivative of the functional $\Psi$ at $u.$ Thus, $\Psi$ can be extended to the whole $W$ and we denote by $\nabla\Psi (u)$ the unique vector representing $D\Psi (u)$ in the Riesz isomorphism, namely
\begin{equation*}
\langle \nabla\Psi (u), v\rangle_W=D\Psi (u) (v),\quad \forall\ v\in W.
\end{equation*}
\item For all $r>0$ there exists a constant $L(r)>0$ such that
\begin{equation*}
\Vert \nabla\Psi (u)-\nabla\Psi (v)\Vert_W\le L(r)\Vert A_0^{1/2}(u-v)\Vert_W,
\end{equation*}
for all $u,v\in \hat W$ satisfying $\Vert  A_0^{1/2}u\Vert_W\le r$ and $\Vert  A_0^{1/2}v\Vert_W\le r.$
\item $\Psi (0)=0, \nabla\Psi (0)=0,$ and there exists an increasing continuous function $ h$ such that, $\forall \ u \in \tilde W,$
\begin{equation*}
\Vert \nabla \Psi (u)\Vert_W \le h(\Vert A_0^{1/2}u\Vert_W)\Vert A_0^{1/2}u\Vert_W
\end{equation*}
for all  $\ u \in \tilde W$.
\end{enumerate}

In this framework, let us consider the following second-order model:

\begin{equation}\label{57}
\begin{cases}
u_{tt}+ A_0 u + C C^* u_t=\nabla \Psi (u) +\sum_{i=1}^l k_iD_iD_i^*u_t (t-\tau_i(t)), \\
\hspace{7,6 cm}\qtq{in}(0,\infty),\\
u(0)=u_0,\ u_t(0)=u_1, \\
D_i^*u_t(t)=g_i(t)\qtq{for}t\in [-\tau^*, 0],\ i=1,\dots, l,
\end{cases}
\end{equation}
with $(u_0,u_1)\in \tilde W\times W,$ $g_i \in C([-\tau^*,0]; W).$ Here,  the functions $k_i(\cdot),$ $\tau_i(\cdot),$  and the constant $\tau^*$ as defined as before.
In order to
 deal with the nonlinear model we assume
\begin{equation*}
\tau_i(t)\ge\underline{\tau}_i,\quad i=1,\dots,l,
\end{equation*}
and let us set
\begin{equation}\label{58}
\tau_{min}=\min\  \{\, \underline{\tau}_i\,:\, i=1,\dots,l\,\}.
\end{equation}

If we denote $v:=u_t$ and $U:= (u,v)^T,$ \eqref{57} can be recast in the more abstract form \eqref{51qqq} where operator $A$ is defined in $H:= \tilde W\times W$ by
\begin{equation*}
A:=
\begin{pmatrix}
0&1\\
-A_0&-CC^*
\end{pmatrix}
,
\end{equation*}
while $F$ and $B_i$ for $i=1,\dots, l$ are defined by
\begin{equation*}
F(U):=(0,\nabla \Psi (u))^T
\qtq{and}
B_i U:=(0, D_iD_i^*v)^T.
\end{equation*}
Under some conditions on the damping operator $CC^*$ (see, e.g., \cite{BLR} or \cite[Chapter 5]{Komornikbook}) we know that $A$ generates an exponentially stable  semigroup.
Moreover, the above assumptions on $\Psi$  imply that $F(0)=0$ and $F$ satisfies \eqref{52qqq}.

We define the energy functional for the model \eqref{57} as
\begin{multline*}
E(t):= E(t, u(\cdot ))= \frac 12 \Vert u_t\Vert_W^2 +\frac 12 \Vert A_0^{1/2}u\Vert_W^2-\Psi (u)\\
+\frac 12\sum_{i=1}^l \frac 1 {1-c_i}\int_{t-\tau_i(t)}^t \vert k_i(\varphi_i^{-1}(s))\vert\cdot \Vert D_i^*u_t(s)\Vert^2_{W_i} ds.
\end{multline*}
We are going to show that the problem \eqref{57} satisfies the well-posedness assumption and the exponential decay estimate of Theorem \ref{t51qqq} for {\em small} initial data under a suitable compatibility condition between the functions $k_i$ and the constant $a$ in \eqref{56}.
We need a preliminary lemma.

\begin{lemma}\label{l52qqq}
Assume that  $k_i(t)=k^1_i(t) + k^2_i(t)$ with
$k^1_i\in L^1([0, +\infty))$ and $k^2_i\in L^\infty (0,+\infty)$ for $i=1,\dots ,l$. Furthermore, assume that
\begin{equation}\label{59}
\Vert k^2_i\Vert_\infty  \le \frac {2a} l\cdot\frac {1-c_i}{2-c_i},\quad i=1,\dots,l.
\end{equation}
Then, for any solution $u$ of problem \eqref{57}, defined on $[0, T)$ for some $T>0$, and satisfying $E(t)\ge \frac 1 4 \Vert u_t(t)\Vert^2$ for all $t\in [0, T),$  we have
\begin{equation}\label{510}
E(t)\le\bar{C} E(0)\qtq{for all}t\in [0,T)
\end{equation}
with
\begin{equation}\label{511}
\bar{C}= e^{2\sum_{i=1}^l d_i\int_0^{+\infty}
( \frac 1 {1-c_i} \vert k_i^1(\varphi_i^{-1}(s))\vert + \vert k_i^1(s)\vert )\ ds
}.
\end{equation}
\end{lemma}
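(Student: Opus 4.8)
The plan is to differentiate the energy $E(t)$ along a solution $u$ of \eqref{57} and show that the derivative is controlled by $E(t)$ itself up to an integrable error term, so that Gronwall's inequality yields \eqref{510}. First I would compute $\frac{d}{dt}E(t)$: the contribution of the undelayed part $\frac12\|u_t\|_W^2+\frac12\|A_0^{1/2}u\|_W^2-\Psi(u)$ produces, after pairing the equation with $u_t$ and using $\langle\nabla\Psi(u),u_t\rangle_W$, the dissipative term $-\|C^*u_t\|_{W_0}^2$ together with the delay coupling $\sum_i k_i(t)\langle D_i^*u_t(t-\tau_i(t)),D_i^*u_t(t)\rangle_{W_i}$. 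For the boundary-memory term $\frac12\sum_i\frac1{1-c_i}\int_{t-\tau_i(t)}^t|k_i(\varphi_i^{-1}(s))|\,\|D_i^*u_t(s)\|_{W_i}^2\,ds$, the Leibniz rule gives a positive contribution $\frac12\sum_i\frac1{1-c_i}|k_i(\varphi_i^{-1}(t))|\,\|D_i^*u_t(t)\|_{W_i}^2$ at the upper limit and, at the lower limit, a factor $(1-\tau_i'(t))$ which by \eqref{14qqq} exceeds $1-c_i$, so that term absorbs (with the right sign) a full copy of $\frac12|k_i(\varphi_i^{-1}(t-\tau_i(t)))|\,\|D_i^*u_t(t-\tau_i(t))\|_{W_i}^2 = \frac12|k_i(t)|\,\|D_i^*u_t(t-\tau_i(t))\|_{W_i}^2$ after substituting $\varphi_i^{-1}(t-\tau_i(t))=t$.

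Next I would estimate the indefinite-sign cross term by Cauchy--Schwarz and Young: for each $i$,
\[
|k_i(t)|\,\bigl|\langle D_i^*u_t(t-\tau_i(t)),D_i^*u_t(t)\rangle\bigr|
\le \tfrac12|k_i(t)|\,\|D_i^*u_t(t-\tau_i(t))\|^2 + \tfrac12|k_i(t)|\,\|D_i^*u_t(t)\|^2.
\]
The first half is exactly cancelled by the lower-limit term from the memory integral. For the second half I split $k_i=k_i^1+k_i^2$; the $k_i^2$ part is bounded using \eqref{55} by $\tfrac12\|k_i^2\|_\infty d_i\|u_t(t)\|_W^2$, and summing over $i$ and invoking the smallness condition \eqref{59} together with \eqref{56} (which gives $a\|u_t\|_W^2\le\|C^*u_t\|_{W_0}^2$, hence a lower bound on the dissipation) lets the $k_i^2$ contribution be absorbed by $-\|C^*u_t\|_{W_0}^2$. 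What survives is the $k_i^1$ part, which is bounded by $\tfrac12 d_i|k_i^1(t)|\,\|u_t(t)\|_W^2$, plus the upper-limit memory term $\tfrac12\sum_i\frac1{1-c_i}|k_i^1(\varphi_i^{-1}(t))|\,\|D_i^*u_t(t)\|_{W_i}^2\le\tfrac12\sum_i\frac{d_i}{1-c_i}|k_i^1(\varphi_i^{-1}(t))|\,\|u_t(t)\|_W^2$ that we could not cancel.

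Thus $E'(t)\le \sum_i d_i\bigl(\tfrac1{1-c_i}|k_i^1(\varphi_i^{-1}(t))|+|k_i^1(t)|\bigr)\cdot\tfrac12\|u_t(t)\|_W^2$, and here is where the hypothesis $E(t)\ge\tfrac14\|u_t(t)\|^2$ enters: it gives $\tfrac12\|u_t(t)\|^2\le 2E(t)$, whence
\[
E'(t)\le 2\sum_{i=1}^l d_i\Bigl(\tfrac1{1-c_i}|k_i^1(\varphi_i^{-1}(t))|+|k_i^1(t)|\Bigr)E(t).
\]
Integrating (Gronwall) over $[0,t)$ and using $k_i^1\in L^1$ and the fact that $\varphi_i^{-1}$ is Lipschitz so $k_i^1\circ\varphi_i^{-1}\in L^1$ as well, I obtain \eqref{510} with the constant $\bar C$ in \eqref{511}. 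The main obstacle is the bookkeeping in the first two steps: making sure the lower-limit term of the memory integral has the correct sign and magnitude (this is exactly where \eqref{14qqq}, i.e. $1-\tau_i'(t)\ge 1-c_i$, is used) and that the $k_i^2$ smallness constant in \eqref{59} is precisely what is needed to absorb $\tfrac l2\cdot\tfrac{2-c_i}{1-c_i}\cdot\tfrac{a}{?}$-type terms into the natural damping $-\|C^*u_t\|_{W_0}^2\le -a\|u_t\|_W^2$; everything else is routine.
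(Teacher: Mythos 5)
Your argument follows the paper's proof essentially step for step: differentiate $E$, apply Cauchy--Schwarz/Young to the delay coupling, cancel the delayed term against the lower-limit contribution of the memory integral via $1-\tau_i'(t)\ge 1-c_i$ from \eqref{14qqq}, absorb the $k_i^2$ contributions into $-\Vert C^*u_t\Vert_{W_0}^2$, and run Gronwall on the surviving $k_i^1$ terms using $E(t)\ge\frac14\Vert u_t(t)\Vert^2$ exactly as the paper does, arriving at \eqref{510}--\eqref{511}. The one place where your bookkeeping does not quite close is the absorption of the $k_i^2$ terms: you first apply \eqref{55} to pass to $\tfrac12\Vert k_i^2\Vert_\infty\frac{2-c_i}{1-c_i}\,d_i\Vert u_t\Vert_W^2$ and then invoke \eqref{56} in the form $a\Vert u_t\Vert_W^2\le\Vert C^*u_t\Vert_{W_0}^2$; with \eqref{59} this leaves $\frac{a}{l}\sum_i d_i\Vert u_t\Vert_W^2$, which is dominated by $a\Vert u_t\Vert_W^2$ only if $\frac1l\sum_i d_i\le1$ --- an assumption that is not made, so a spurious factor $d_i$ survives. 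The paper instead applies \eqref{56} directly at the level of $\Vert D_i^*u_t\Vert_{W_i}^2$ (the inequality is to be read as $a\Vert D_i^*u\Vert_{W_i}^2\le\Vert C^*u\Vert_{W_0}^2$, the statement of \eqref{56} containing an evident typo), so that the $i$-th $k_i^2$ term is bounded by $\frac1l\Vert C^*u_t\Vert_{W_0}^2$ and the sum is cancelled exactly; with that reading, \eqref{59} is precisely sufficient, $d_i$ enters only through \eqref{55} in the final $k_i^1$ Gronwall step, and your proof coincides with the paper's. (A minor side remark: the integrability of $k_i^1\circ\varphi_i^{-1}$ follows from $\varphi_i$, not $\varphi_i^{-1}$, being Lipschitz, i.e.\ from $\tau_i\in W^{1,\infty}$ giving an upper bound on $\varphi_i'$ in the change of variables.)
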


\begin{proof}
Differentiating, we have
\begin{align*}
E^\prime(t)
&= -\Vert C^*u_t(t)\Vert_{W_0}^2 +\sum_{i=1}^lk_i(t)\langle D_i^*u_t(t), D_i^* u_t(t-\tau_i(t))\rangle \\
&\qquad +\sum_{i=1}^l \frac {\vert k_i(\varphi^{-1}(t)\vert }{2 (1-c_i)}\Vert D_i^*u_t(t)\Vert^2_{W_i}\\
&\qquad -\sum_{i=1}^l \frac {\vert k_i(t)\vert }{2 (1-c_i)}(1-\tau_i^\prime(t))\Vert D_i^*u_t(t-\tau_i(t))\Vert^2_{W_i}.
\end{align*}
Recalling \eqref{14qqq} and using the
 Cauchy--Schwarz inequality hence we infer that
\begin{align*}
E^\prime(t)
&\le  -\Vert C^*u_t(t)\Vert_{W_0}^2 +\frac 1 2 \sum_{i=1}^l \Big ( \frac 1 {1-c_i} \vert k_i(\varphi^{-1}(t))\vert +\vert k_i(t)\vert    \Big )\Vert D_i^*u_t(t)\Vert_{W_i}^2\\
&\le -\Vert C^*u_t(t)\Vert_{W_0}^2 + \frac 1 2 \sum_{i=1}^l \Big ( \frac 1 {1-c_i} \vert k_i^2(\varphi^{-1}(t))\vert +\vert k_i^2(t)\vert    \Big )\Vert D_i^*u_t(t)\Vert_{W_i}^2\\
&\qquad +\frac 1 2 \sum_{i=1}^l \Big ( \frac 1 {1-c_i} \vert k_i^1(\varphi^{-1}(t))\vert +\vert k_i^1(t)\vert    \Big )\Vert D_i^*u_t(t)\Vert_{W_i}^2,
\end{align*}
and then, using \eqref{56} and \eqref{59}, that
\begin{equation}\label{512}
E^\prime (t) \le \frac 1 2 \sum_{i=1}^l  \Big ( \frac 1 {1-c_i}\vert k_i^1(\varphi_i^{-1}(t)\vert +\vert k_i^1(t)\vert \Big   )\, \Vert D_i^* u_t(t)\Vert_{W_i}^2\,.
\end{equation}
From \eqref{512}, recalling \eqref{55}, we obtain
\begin{equation*}
E(t)\le E(0)+\frac 14 \int_0^t K(s) \Vert u_t(s)\Vert^2\, ds
\end{equation*}
with the notation
\begin{equation*}
K(t):=2\sum_{i=1}^l d_i \Big (\frac 1 {1-c_i} \vert k_i^1(\varphi_i^{-1}(t))\vert +\vert k_i^1(t)\vert \Big )\,.
\end{equation*}
Now Gronwall's inequality yields
\begin{equation*}
E(t)\le E(0) e^{\int_0^t K(s)\ ds },
\end{equation*}
proving \eqref{510} with $\bar{C}$ defined by \eqref{511}.
\end{proof}
\begin{proposition}\label{p53qqq}
Assume that  $k_i(t)=k^1_i(t) + k^2_i(t), i=1,\dots ,l,$ with
$k^1_i\in L^1([0, +\infty))$ and $k^2_i\in L^\infty (0,+\infty).$
Moreover, assume \eqref{59}.
Then, the model \eqref{57} satisfies the well posedness assumption of Theorem \ref{t51qqq}.
\end{proposition}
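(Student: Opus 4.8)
The plan is to verify, piece by piece, the well-posedness assumption that precedes Theorem~\ref{t51qqq} for the concrete system \eqref{57}, recast in the abstract form \eqref{51qqq} via $U=(u,u_t)^T$. That $A$ generates an exponentially stable semigroup, that $F(0)=0$, and that $F$ obeys the local Lipschitz bound \eqref{52qqq} has already been recorded, so only two points remain: (a) the integral condition \eqref{47qqq} for suitable $\omega'\in(0,\omega)$ and $\gamma\in\RR$; and (b) for all sufficiently small data, the existence of a unique global solution remaining in a ball of radius $C_\rho$ with $L(C_\rho)<(\omega-\omega')/M$.

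For (a) the plan is a routine $L^1+L^\infty$ estimate. Using the splitting $k_i=k_i^1+k_i^2$ and the substitution $\sigma=\varphi_i(s)=s-\tau_i(s)$, which is invertible by \eqref{14qqq} and has Jacobian $1-\tau_i'(s)$ bounded between $1-c_i$ and a finite constant (as $\tau_i\in W^{1,\infty}$), one sees that $k_i\circ\varphi_i^{-1}$ splits again as an $L^1$ function plus an $L^\infty$ function of norm at most $\norm{k_i^2}_\infty$; hence $\int_0^t\abs{k_i(\varphi_i^{-1}(s))}\,ds\le\norm{k_i^1\circ\varphi_i^{-1}}_{L^1}+t\norm{k_i^2}_\infty$. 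Inserting this into the left-hand side of \eqref{47qqq}, with $\norm{B_i}\le d_i$ (because $B_iU=(0,D_iD_i^*v)^T$ and $\norm{D_iD_i^*}=\norm{D_i^*}^2\le d_i$ by \eqref{55}), gives \eqref{47qqq} with $\gamma:=M\sum_i\tfrac{e^{\omega\overline\tau_i}}{1-c_i}\norm{B_i}\norm{k_i^1\circ\varphi_i^{-1}}_{L^1}$ and $\omega':=M\sum_i\tfrac{e^{\omega\overline\tau_i}}{1-c_i}\norm{B_i}\norm{k_i^2}_\infty$, which is an admissible choice once $\omega'<\omega$ --- a smallness restriction of the type provided by \eqref{59}.

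For (b) I would first obtain local existence and uniqueness for \eqref{51qqq} exactly as in the first part of the proof of Theorem~\ref{t41}: since for the nonlinear model $\tau_i(t)\ge\underline\tau_i>0$, one advances step by step on intervals of length $\tau_{min}$ (see \eqref{58}), on each of which the delayed terms are already known, so that \eqref{51qqq} becomes $U'=AU+G(t)+F(U)$ with $G\in L^1$, solvable by a short-time contraction built on Duhamel's formula and \eqref{52qqq}; this produces a unique maximal solution on $[0,T_{\max})$. The global bound is then a continuation argument anchored on Lemma~\ref{l52qqq}. Pick $\eta>0$ so small that $\Psi(u)\le\tfrac14\norm{A_0^{1/2}u}_W^2$ whenever $\norm{A_0^{1/2}u}_W\le\eta$; this follows from assumption (iii) on $\Psi$ by writing $\Psi(u)=\int_0^1\langle\nabla\Psi(su),u\rangle_W\,ds$, bounding with $h(\norm{A_0^{1/2}u}_W)$, and using the Poincaré-type inequality $\norm{u}_W\le\lambda_1^{-1/2}\norm{A_0^{1/2}u}_W$ (available because $A_0$ is positive with compact inverse). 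As long as $\norm{A_0^{1/2}u(t)}_W\le\eta$, nonnegativity of the memory term yields $E(t)\ge\tfrac12\norm{u_t(t)}_W^2+\tfrac14\norm{A_0^{1/2}u(t)}_W^2\ge\tfrac14\norm{U(t)}_H^2$, so the hypothesis $E(t)\ge\tfrac14\norm{u_t(t)}^2$ of Lemma~\ref{l52qqq} holds and the lemma gives $E(t)\le\bar{C}E(0)$; moreover $E(0)$ is bounded by a data-independent constant times $\rho^2$ whenever the data obey \eqref{53qqq}, the potential part by the choice of $\eta$ and the memory part after the substitution $\sigma=\varphi_i(s)$. Hence on the maximal set on which $\norm{A_0^{1/2}u}_W\le\eta$ one has $\norm{U(t)}_H^2\le4\bar{C}E(0)$, which is $<\eta^2$ once $\rho$ is small, so that set is relatively open and closed in $[0,T_{\max})$ and therefore equals it; consequently the $H$-norm of the solution stays bounded, $T_{\max}=+\infty$, and $\norm{U(t)}_H\le C_\rho$ with $C_\rho\to0$ as $\rho\to0$. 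Shrinking $\rho$ once more so that $L(C_\rho)<(\omega-\omega')/M$ then completes the verification, and Theorem~\ref{t51qqq} applies.

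The main obstacle is the continuation step in (b): keeping $\norm{A_0^{1/2}u}$ below the threshold $\eta$ so that Lemma~\ref{l52qqq} remains applicable, and fixing the constants $\eta,\bar{C},\omega',C_\rho,\rho$ in a consistent order so that simultaneously $\omega'<\omega$, $4\bar{C}E(0)<\eta^2$, and $L(C_\rho)<(\omega-\omega')/M$; controlling the memory part of $E(0)$ by the quantity in \eqref{53qqq} through the change of variables is the one genuinely fiddly computation. Step (a), by contrast, is only bookkeeping once the substitution is set up.
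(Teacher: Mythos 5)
Your proposal is correct and follows essentially the same route as the paper: local mild solutions on intervals of length $\tau_{min}$ for the abstract form \eqref{513}, then a continuity/bootstrap argument built on Lemma \ref{l52qqq} and the threshold $h^{-1}(\tfrac12)$ (your condition $\Vert A_0^{1/2}u(t)\Vert_W\le\eta$ is equivalent, via \eqref{514}, to the paper's inequality \eqref{516}), giving $E(t)\le\bar{C}E(0)\le\bar{C}\rho^2$ for small data and hence a global bounded solution by iterating over the delay intervals. The only difference is that you also sketch the verification of \eqref{47qqq} via the $L^1+L^\infty$ splitting, which the paper delegates to the remark after Theorem \ref{t42qqq}; note, though, that (like the paper) you do not actually derive the smallness $\omega'<\omega$, nor $L(C_\rho)<(\omega-\omega')/M$, from \eqref{59} itself, so on these points your argument is no more (and no less) complete than the paper's own proof.
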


\begin{proof}
First we restrict ourselves to the time interval $[0, \tau_{min}]$  where $\tau_{min}$ is the constant defined in \eqref{58}.
In such an interval the model can be rewritten in the abstract form

\begin{equation}\label{513}
\begin{cases}
U'(t)=AU(t)+\sum_{i=1}^lk_i(t) G_i(t)+F(U(t))\qtq{in}(0,\infty),\\
U(0)=U_0,
\end{cases}
\end{equation}
where $G_i(t)=(0, g_i(t-\tau_i(t))), $ $i=1,\dots,l.$
Then one can apply the classical theory of nonlinear semigroup to deduce the existence of a unique mild solution defined on a maximal interval $[0, \delta )$ with $\delta\le\tau_{min}.$
We will show that for suitably small initial data the solution is global and it satisfies a certain bound.
Our argument is inspired by \cite{ACS} but here additional difficulties appear due to the fact that, being $k_i(\cdot)$ variable in time, we do not have a decreasing energy.  First we observe that if $h\left ( \Vert A_0^{1/2}u_0\Vert_W \right ) <\frac 12,$ then $E(0)>0.$
Indeed,  we deduce from the assumption (iii) on $\Psi$ that
\begin{multline}\label{514}
\vert \Psi(u) \vert\le \int_0^1\vert \langle \nabla \Psi (su), u\rangle\vert\, ds\\
\le
\Vert A_0^{1/2} u\Vert_W^2\, \int_0^1 h (s\Vert A_0^{1/2} u\Vert_W ) s\, ds
\le \frac 12 h (\Vert A_0^{1/2} u\Vert_W  ) \Vert A_0^{1/2} u\Vert^2_W
\end{multline}
Then we have the estimate
\begin{align*}
E(0)&= \frac 12 \Vert u_1\Vert_W^2 +\frac 12 \Vert A_0^{1/2} u_0\Vert_W^2 -\Psi (u_0)\\
&\qquad +\frac 1 2\sum_{i=1}^l \frac 1 {1-c_i} \int_{-\tau_i(0)}^0 \vert k_i(\varphi_i^{-1}(s))\vert\, \Vert D_i^*u_t(s)\Vert^2_{W_i}\ ds\\
&\ge\frac 12 \Vert u_1\Vert_W^2 +\frac 14 \Vert A_0^{1/2} u_0\Vert_W^2\\
&\qquad \frac 1 2\sum_{i=1}^l \frac 1 {1-c_i} \int_{-\tau_i(0)}^0 \vert k_i(\varphi_i^{-1}(s))\vert\, \Vert D_i^*u_t(s)\Vert^2_{W_i}\ ds>0.
\end{align*}

Now  we prove that if
\begin{equation}\label{515}
h\left ( \Vert A_0^{1/2}u_0\Vert_W \right ) <\frac 12 \quad \mbox{and}\quad h\left ( 2 \bar{C}^{1/2}E^{1/2}(0) \right ) <\frac 12,
\end{equation}
where
$\bar{C}$ is the constant defined in \eqref{511},
then
\begin{multline}\label{516}
E(t)> \frac 14 \Vert u_t(t)\Vert_W^2 +\frac 14 \Vert A_0^{1/2} u(t)\Vert^2_W\\
+\frac 1 4\sum_{i=1}^l \frac 1 {1-c_i} \int_{t-\tau_i(t)}^t \vert k_i(\varphi_i^{-1}(s))\vert\, \Vert D_i^*u_t(s)\Vert^2_{W_i}\ ds
\end{multline}
for all $t\in [0,\delta )$.
Indeed, let $r$ be the supremum of all $s\in [0,\delta )$ such that \eqref{516} holds true for every $t\in [0,s].$
Arguing by contradiction, suppose that$r<\delta.$
Then by continuity we have
\begin{multline}\label{517}
E(r)= \frac 14 \Vert u_t(r)\Vert_W^2 +\frac 14 \Vert A_0^{1/2} u(r)\Vert^2_W\\
+\frac 1 4\sum_{i=1}^l \frac 1 {1-c_i} \int_{r-\tau_i(r)}^r \vert k_i(\varphi_i^{-1}(s))\vert\, \Vert D_i^*u_t(s)\Vert^2_{W_i}\ ds.
\end{multline}
Therefore we infer from \eqref{517} and Lemma \ref{l52qqq} that
\begin{equation*}
h( \Vert A_0^{1/2}u(r)\Vert_W)\le h(2 E^{1/2}(r))\le h( 2 \bar{C}^{1/2} E^{1/2}(0))<\frac 12\,.
\end{equation*}
Using \eqref{514} in the definition of $E(r),$ this gives
\begin{align*}
E(r)&= \frac 12 \Vert u_t(r)\Vert_W^2 +\frac 12 \Vert A_0^{1/2} u(r)\Vert^2_W-\Psi (u(r))\\
&\qquad +\frac 1 2\sum_{i=1}^l \frac 1 {1-c_i} \int_{r-\tau_i(r)}^r \vert k_i(\varphi_i^{-1}(s))\vert\cdot\Vert D_i^*u_t(s)\Vert^2_{W_i}ds
\\
&>\frac 14 \Vert u_t(r)\Vert_W^2 +\frac 14 \Vert A_0^{1/2} u(r)\Vert^2_W\\
&\qquad +
\frac 1 4\sum_{i=1}^l \frac 1 {1-c_i} \int_{r-\tau_i(r)}^r \vert k_i(\varphi_i^{-1}(s))\vert\cdot\Vert D_i^*u_t(s)\Vert^2_{W_i}ds,
\end{align*}
contradicting the maximality of $r.$
This implies $r=\delta.$

Now, let us define
\begin{equation}\label{518}
\rho:=
\frac 1 {2\bar{C}^{1/2}} h^{-1}(\frac 12).
\end{equation}
We show that \eqref{515} is satisfied for all $u_0\in \tilde W,$ $u_1\in W,$ $g_i\in C([-\tau^*,0], W_i),$ $i=1,\dots,l,$ satisfying
\begin{equation*}
\Vert A_0^{1/2}u_0\Vert_W^2+\Vert u_1\Vert_W^2+ \frac 12 \sum_{i=1}^l\frac 1 {1-c_i}\int_{-\tau_i(0)}^0 \vert k_i(\varphi_i^{-1} (s))\vert\, \Vert g_i(s)\Vert^2_{W_i}\, ds
<\rho^2.
\end{equation*}
Indeed, this assumption implies $\Vert A_0^{1/2}u_0\Vert_W <\rho$ and then, observing that $\bar{C}>1,$ we have
$$ h \Big ( \Vert A_0^{1/2}u_0\Vert_W\Big ) < h(\rho )=h \Big (  \frac 1 {2\bar{C}^{1/2}} h^{-1}(\frac 12  )\Big )< \frac 12\,.$$
Moreover, from \eqref{514} we get the estimate
\begin{multline*}
E(0)\le \frac 34 \Vert A_0^{1/2} u_0\Vert_W^2+\frac 12\Vert u_1\Vert_W^2\\
+ \frac 12 \sum_{i=1}^l\frac 1 {1-c_i}\int_{-\tau_i(0)}^0 \vert k_i(\varphi_i^{-1} (s))\vert\, \Vert g_i(s)\Vert^2_{W_i}\, ds<\rho^2,
\end{multline*}
and thus, from \eqref{518} we infer that
\begin{equation*}
h\left ( 2 \bar{C}^{1/2}E^{1/2}(0) \right ) < h (2\bar{C}^{1/2}\rho ) <h(h^{-1}(1/2))= \frac 1 2.
\end{equation*}
We  conclude that \eqref{515} holds, and that
\begin{multline*}
0< \frac 14 \Vert u_t(t)\Vert^2_W +\frac 14 \Vert A_0^{1/2} u(t)\Vert^2_W\\
+\frac 1 4\sum_{i=1}^l \frac 1 {1-c_i} \int_{t-\tau_i(t)}^t \vert k_i(\varphi_i^{-1}(s))\vert\, \Vert D_i^*u_t(s)\Vert^2_{W_i}ds\\
\le E(t)\le \bar{C} E(0)\le \bar{C}\rho^2, \ \forall \ t\in [0,\delta].
\end{multline*}
One can extend the solution of problem (\ref{513}) by considering as initial datum the solution at time $t=\delta\,.$
Arguing as above, we can extend the solution to the whole $[0,\tau_{min}]$ and the solution satisfies
$$h\left (\Vert A_0^{1/2}u(\tau_{min})\Vert_W \right )\le h\left (2 E^{1/2}(\tau_{min})\right )\le h\left (2 \bar{C}^{1/2}E^{1/2}(0) \right )<\frac 12,$$
where we have applied estimate \eqref{510}
on the whole interval $[0,\tau_{min}].$ Once we have the solution $U(\cdot)$ on the interval $[0, \tau_{min}],$ then on the second interval $[\tau_{min}, 2\tau_{min}]$ one  can rewrite  again
our problem in the abstract form  (\ref{513}) with $G_i(t)=(0, D_iD_i^*u_t(t-\tau_i(t)))$ (note that, for $t\in     [\tau_{min}, 2\tau_{min}]$ it results $t-\tau_i(t)\le\tau_{min}$). One can repeat the same argument on every interval of length $\tau_{min}$ to get a global solution satisfying \eqref{510}.
\end{proof}

\section{Examples}\label{s6qqq}

In the following examples we consider a non-empty bounded domain $\Omega$ in $\RR^n$ with a boundary $\Gamma $ of class $C^2$.

\subsection{The wave equation with localized frictional damping}

Let $O\subset \Omega$ be a nonempty open subset satisfying the \emph{geometrical control  property} in \cite{BLR}.
For instance, denoting by $m$ the standard multiplier $m(x)=x-x_0,$ $x_0\in\RR^n,$ as in \cite{Lions-1988}, $O$ can be the intersection of $\Omega$ with an open neighborhood of the set
\begin{equation*}
\Gamma_0=\set{x\in\Gamma \:\ m(x)\cdot\nu (x)>0}.
\end{equation*}
Denoting by $\chi_D$ the characteristic function of a set $D,$
let us consider the following system:

\begin{equation}\label{61qqq}
\begin{cases}
u_{tt}(x,t)-\Delta u(x,t)+a\chi_O(x) u_t(x,t) \\
\hspace{2cm} {-k(t)\chi_{\tilde O}(x) u_t(x,t-\tau)=0}\qtq{in}\Omega\times (0, \infty),\\
{u(x,t)=0\qtq{on}\Gamma\times (0,\infty )},\\
{u(x,t-\tau)=u_0(x, t) \qtq{in}\Omega\times (0,\tau]},\\
{u_t(x,t-\tau)=u_1(x,t)}
{\qtq{in}\Omega\times (0,\tau]},
\end{cases}
\end{equation}
where $a$ is a positive constant,
$\tau >0$ is the time delay, and the damping coefficient $k$ belongs to $L^1_{loc}(0,\infty)$.
The set $\tilde O\subset\Omega$ where the delay feedback is localized can be any measurable subset of $\Omega.$

Setting $U= (u, u_t)^T,$ this problem can be rewritten in the form \eqref{11}
with $H=  H^1_0(\Omega)\times L^2(\Omega),$
\begin{equation*}
A=
\begin{pmatrix}
0&1\\\Delta&\chi_O
\end{pmatrix}
\end{equation*}
and
$B: H\to H$ defined by
\begin{equation*}
B (u,v)^T= (0, \chi_{\tilde O}v)^T.
\end{equation*}
It is well-known that ${A}$ generates a strongly continuous semigroup which is exponentially stable (see e.g. \cite{zuazua, Komornikbook}).
Since $\norm{B}\le 1$, we have exponential stability result under the assumption
\begin{equation}\label{62qqq}
Me^{\omega\tau}\int_0^t\abs{k(t+\tau )}\ ds \le\gamma+\omega't\qtq{for all}t\ge 0
\end{equation}
for some $\omega'<\omega$ and $\gamma\in\RR$, where $M$ and $\omega$ denote the positive constants in the exponential estimate \eqref{12} for the semigroup generated by $A.$
This extends to more general delay feedbacks a previous result of the second author \cite{SCL12}.

\subsection{The wave equation with memory }\label{ss62qqq}
Given an arbitrary open subset $O$ of $\Omega$, we consider the system

\begin{equation}\label{63qqq}
\begin{cases}
u_{tt}(x,t) -\Delta u (x,t)+
\int_0^\infty \mu (s)\Delta u(x,t-s) ds\\
\hspace{2,5 cm}
 =k(t)\chi_O (x) u_t(x,t-\tau) )\qtq{in} \Omega\times
(0,\infty),\\
u (x,t) =0\qtq{on}\Gamma \times
(0,\infty),\\
u(x,t)=u_0(x, t)\qtq{in}\Omega\times (-\infty, 0]
\end{cases}
\end{equation}
with a constant time delay $\tau >0$, and a locally absolutely continuous memory kernel $\mu :[0,\infty)\rightarrow [0,\infty)$,
satisfying the following three conditions:
\begin{enumerate}[\upshape (i)]
\item $\mu (0)=\mu_0>0;$
\item $\int_0^{\infty} \mu (t) dt=\tilde \mu <1;$
\item $\mu^{\prime} (t)\le -\alpha \mu (t) \qtq{for some}\alpha >0.$
\end{enumerate}

As in \cite{Dafermos}, we introduce the notation
\begin{equation*}
\eta^t(x,s):=u(x,t)-u(x,t-s).
\end{equation*}
Then we can restate  \eqref{63qqq}
in the following form:
\begin{equation}\label{64qqq}
\begin{cases}
u_{tt}(x,t)= (1-\tilde \mu)\Delta u (x,t)+
\int_0^\infty \mu (s)\Delta \eta^t(x,s) ds\\
\hspace{2.5 cm}
=k(t)\chi_O (x) u_t(x,t-\tau))\qtq{in} \Omega\times
(0,\infty)\\
\eta_t^t(x,s)=-\eta^t_s(x,s)+u_t(x,t)\qtq{in}\Omega\times
(0,\infty)\times (0,\infty ),\\
u (x,t) =0\qtq{on}\Gamma \times
(0,\infty)\\
\eta^t (x,s) =0\qtq{in}\Gamma \times
(0,\infty) \qtq{for} t\ge 0,\\
u(x,0)=u_0(x)\qtq{and}\quad u_t(x,0)=u_1(x)\qtq{in}\Omega,\\
\eta^0(x,s)=\eta_0(x,s) \qtq{in}\ \Omega\times
(0,\infty),
\end{cases}
\end{equation}
where
\begin{equation}\label{65qqq}
\begin{cases}
u_0(x)=u_0(x,0), \quad x\in\Omega,\\
u_1(x)=\frac {\partial u_0}{\partial t}(x,t)\vert_{t=0},\quad x\in\Omega,\\
\eta_0(x,s)=u_0(x,0)-u_0(x,-s),\quad x\in\Omega,\  s\in (0,\infty).
\end{cases}
\end{equation}

Let us introduce the Hilbert space
$L^2_{\mu}((0, \infty);H^1_0(\Omega ))$
of $H^1_0$-valued functions on $(0,\infty),$
endowed with the inner product
\begin{equation*}
\langle \varphi, \psi\rangle_{L^2_{\mu}((0, \infty);H^1_0(\Omega ))}=
\int_{\Omega}\left (\int_0^\infty \mu (s)\nabla \varphi (x,s)\nabla \psi (x,s) ds\right )dx,
\end{equation*}
and then the the Hilbert space
\begin{equation*}
H:=
H^1_0(\Omega)\times L^2(\Omega)\times L^2_{\mu}((0, \infty);H^1_0(\Omega ))
\end{equation*}
equipped  with the inner product
\begin{multline*}\label{innerd}
\left\langle
\begin{pmatrix}
u\\ v\\ w
\end{pmatrix}
,
\begin{pmatrix}
\tilde u\\ \tilde v\\ \tilde w
\end{pmatrix}
\right\rangle_H
:=  (1-\tilde\mu )\int_\Omega \nabla u\nabla\tilde u dx + \int_\Omega v\tilde v dx\\
+\int_{\Omega} \int_0^\infty \mu (s)\nabla w\nabla\tilde w ds dx.
\end{multline*}

Setting
$U:= (u,u_t,\eta^t)^T$
we may rewrite   the problem \eqref{64qqq}--\eqref{65qqq}
in the abstract form
\begin{equation*}\label{abstractd}
\begin{cases}
U_t(t)={ A} {U}(t)+k B U(t-\tau),\\
U(0)=(u_0,u_1, \eta_0)^T,
\end{cases}
\end{equation*}
where the operator $A$ is defined by
\begin{equation*}\label{Operator}
A
\begin{pmatrix}
u\\ v\\ w
\end{pmatrix}
:=
\begin{pmatrix}
v\\
(1-\tilde\mu)\Delta u+\int_0^{\infty}\mu (s)\Delta w(s)ds\\
-w_s+v
\end{pmatrix}
\end{equation*}
with domain
\begin{align*}\label{dominioOpd}
D(A):=
&\left\{(u,v,\eta )\in   H^1_0(\Omega)\times H^1_0(\Omega)
\times L^2_{\mu}((0,\infty);H^1_0(\Omega))\, :\right.\\
&(1-\tilde\mu)u+\int_0^\infty \mu (s)\eta (s) ds \in H^2(\Omega)\cap H^1_0(\Omega),\\
&\left.\eta_s\in  L^2_{\mu}((0\infty);H^1_0(\Omega))\right\}
\end{align*}
in the Hilbert space $H$,
and the bounded operator $B:H\to H$ is defined by the formula
\begin{equation*}
B(u,v, \eta^t)^T:= (0, \chi_O v, 0)^T.
\end{equation*}

It is well-known (see e.g. \cite{GRP}) that the operator $A$ generates an exponentially stable semigroup.
Since $\norm{B}\le 1$, Theorems \ref{p21qqq} and  \ref{t31qqq} guarantee the well-posedness and exponential stability of \eqref{64qqq}--\eqref{65qqq} if the condition \eqref{62qqq} is satisfied
for some $\omega'<\omega$ and $\gamma\in\RR$, where $M$ and $\omega$ denote the positive constants in the exponential estimate \eqref{12} for the semigroup generated by $A.$

\subsection{The wave equation with frictional damping and source}

Let $O\subset \Omega$ be a nonempty open subset satisfying the \emph{geometrical control  property} in \cite{BLR} and let  $\tilde O\subset O.$
As an explicit example of (\ref{513}), let us consider the following system:

\begin{equation}\label{66qqq}
\begin{cases}
u_{tt}(x,t)-\Delta u(x,t)+a\chi_O(x) u_t(x,t)\\
\hspace{6cm} -k(t)\chi_{\tilde O}(x) u_t(x,t-\tau) \\
\qquad =\vert u(x,t)\vert^\mu u(x,t)\qtq{in}\Omega\times (0, \infty),\\
u(x,t)=0\qtq{on}\Gamma\times (0,\infty ),\\
{u(x,t-\tau)=u_0(x, t) \qtq{in}\Omega\times (0,\tau]},\\
{u_t(x,t-\tau)=u_1(x,t)}
{\qtq{in}\Omega\times (0,\tau]},
\end{cases}
\end{equation}
where $a$ is a positive constant,
$\tau >0$ is the time delay, and the damping coefficient $k$ belongs to $L^1_{loc}(0,\infty)$.
Moreover, we assume $k= k^1+k^2$ with $k_1\in L^1([0,+\infty))$ and $k_2\in L^\infty (0,+\infty)$ satisfying $\Vert k^2\Vert_\infty < a.$
Setting $U= (u, u_t)^T,$ this problem can be rewritten in the form \eqref{51qqq}
with $H=  H^1_0(\Omega)\times L^2(\Omega),$
\begin{equation*}
A=
\begin{pmatrix}
0&1\\\Delta&\chi_O
\end{pmatrix}
\end{equation*}
and
$B: H\to H$ defined by
\begin{equation*}
B (u,v)^T= (0, \chi_{\tilde O}v)^T.
\end{equation*}
It is well-known that ${A}$ generates a strongly continuous semigroup which is exponentially stable (see e.g. \cite{zuazua, Komornikbook}).
Next, consider the functional
$$\Psi (u):= \frac 1 {\mu+2} \int_{\Omega} \vert u(x)\vert^{\mu+2} dx,\quad u\in H_0^1(\Omega),$$
which is well-defined, for $0<\mu\le \frac 4 {n-2},$ by Sobolev's embedding theorem. Note that $\Psi$ is G\^{a}teaux differentiable at every $u\in
H_0^1(\Omega)$ and its G\^{a}teaux  derivative is given by
$$D\Psi (u)(v)= \int_{\Omega} \vert u(x)\vert^\mu u(x) v(x)\, dx,\quad v \in H_0^1(\Omega).$$
As showed in \cite{ACS}, assuming
$0<\mu<\frac 4 {n-2},$ then $\Psi$ satisfies previous assumptions (i), (ii), (iii), and then problem \eqref{66qqq} is included in the abstract form (\ref{513}). Since the assumptions of Lemma \ref{l52qqq} are satisfied then Theorem \ref{t51qqq} holds
for small initial data if
\eqref{62qqq}
is satisfied
for some $\omega'<\omega$ and $\gamma\in\RR$, where $M$ and $\omega$ denote the positive constants in the exponential estimate \eqref{12} for the semigroup generated by $A.$

\begin{remark}
A  large variety of other examples could be considered, e.g., the wave equation with standard dissipative boundary conditions and internal delays, plate equations with internal/boundary/viscoelastic dissipative feedbacks and internal delays,  elasticity systems with different kinds of feedbacks.
\end{remark}


\begin{thebibliography}{99}

\bibitem{ACS}
{\sc F.~Alabau-Boussouira, P.~Cannarsa and D.~Sforza.}
\newblock {Decay estimates for second order evolution equations with memory.}
\newblock {\em J. Funct. Anal.}, 254:1342--1372, 2008.




\bibitem{BLR}
{\sc C.~Bardos, G.~Lebeau and J.~Rauch.}
\newblock {Sharp sufficient conditions for the observation, control and
stabilization of waves from the boundary.}
\newblock {\em SIAM J. Control Optim.}, 30:1024--1065, 1992.

\bibitem{BP}
{\sc A.~B\'{a}tkai and S.~Piazzera.}
\newblock {\em Semigroups for delay equations,}
\newblock Research Notes in Mathematics, 10. AK Peters, Ltd., Wellesley, MA, 2005.

\bibitem{Dafermos}
{\sc C.M.~Dafermos.}
\newblock Asymptotic stability in viscoelasticity.
\newblock {\em Arch. Rational Mech. Anal.}, 37:297--308, 1970.

\bibitem{DLP}
{\sc R.~Datko, J.~Lagnese and M.~P.~Polis.}
\newblock An example on the effect of time delays in boundary feedback stabilization of
wave equations.
\newblock  {\em SIAM J. Control Optim.}, 24:152--156, 1986.

\bibitem{FP}
{\sc G.~Fragnelli and C.~Pignotti.}
\newblock Stability of solutions to nonlinear wave equations with switching time-delay.
\newblock
{\em Dyn. Partial Differ. Equ.}, 13:31-51, 2016.

\bibitem{GRP}
{\sc C.~Giorgi, J.E.~Mu\~{n}oz Rivera and V.~Pata}.
\newblock Global attractors for a semilinear hyperbolic equation in viscoelasticity.
\newblock {\em J. Math. Anal. Appl.}, 260:83--99,
2001.

\bibitem{Haraux}
{\sc A.~Haraux, P.~Martinez and J.~Vancostenoble}.
\newblock
Asymptotic stability for intermittently controlled second-order evolution equations.
\newblock {em SIAM J. Control Optim.}, 43:2089--2108, 2005.
\bibitem{Komornikbook}
{\sc V.~Komornik.}
\newblock {\em Exact controllability and stabilization.
The multiplier method}.
\newblock Masson, Paris and John Wiley \& Sons, Chicester, 1994.

\bibitem{Komornik-Zuazua-1990}
{\sc V.~Komornik and E.~Zuazua.}
{\em A direct method for the boundary stabilization of the wave equation},
J. Math. Pures Appl. (9) 69 (1990), 33--54.

\bibitem{Lions-1988}
{\sc J.-L.~Lions.}
{\em Exact controllability,
stabilizability, and perturbations for distributed systems},
Siam Rev. 30 (1988), 1--68.



\bibitem{JEE15}
{\sc S.~Nicaise and C.~Pignotti.}
\newblock Exponential stability of abstract evolution equations with time delay.
\newblock {\em J. Evol. Equ.},  15:107--129, 2015.

\bibitem{NicaisePignotti18}
{\sc S.~Nicaise and C.~Pignotti.}
\newblock Well--posedness and stability results for nonlinear abstract evolution equations with time delays.
\newblock {\em J. Evol. Equ.}, 18:947--971, 2018.

\bibitem{Pazy}
{\sc A.~Pazy.}
\newblock {\em Semigroups of linear operators and applications to partial differential equations}, Vol. 44 of {\em Applied Math. Sciences.} Springer-Verlag, New York, 1983.

\bibitem{Pignotti}
{\sc C.~Pignotti.}
\newblock Stability results for second-order evolution equations with memory and switching time-delay.
\newblock{\em J. Dynam. Differential Equations}, 29:1309--1324, 2017.

\bibitem{SCL12}
{\sc C.~Pignotti.}
\newblock A note on stabilization of locally damped wave equations with time delay.
\newblock {\em Systems and Control Lett.}, 61:92--97,  2012.

\bibitem{zuazua}
{\sc E.~Zuazua.}
\newblock Exponential decay for the semi-linear wave equation with locally distributed damping.
\newblock {\em Comm. Partial Differential Equations}, 15:205--235, 1990.


\end{thebibliography}
\end{document}